\numberwithin{figure}{section}
\newtheorem{theorem}{Theorem}[section]                                          
\newtheorem{prop}[theorem]{Proposition}
\newtheorem{defn}[theorem]{Definition}
\newtheorem{remark}{Remark}[section]
\newtheorem{exa}{Example}[section]
\def\T+{{\mathbb T_d^+}}
\def\A{\mathcal{A}}
\title[]{On the connection between evolution algebras,\\ random walks and graphs}
\author{Paula Cadavid, Mary Luz Rodi\~no Montoya and Pablo M. Rodr\'iguez}
\date{}
\address{
\newline
\newline
Paula Cadavid and Pablo M. Rodr\'iguez
\newline
Instituto de Ci\^encias Matem\'aticas e de Computa\c{c}\~ao - Universidade de S\~ao Paulo
\newline  
Av. Trabalhador s\~ao-carlense 400 - Centro, CEP 13560-970, S\~ao Carlos, SP, Brasil
\newline
e-mail:  pablor@icmc.usp.br
\newline
e-mail: paca@ime.usp.br 
\newline
\newline
Mary Luz Rodi\~no Montoya
\newline
Instituto de Matem\'aticas - Universidad de Antioquia 
\newline
Calle 67 N$^{\circ}$ 53-108, Medell\'in, Colombia
\newline 
e-mail: mary.rodino@udea.edu.co 
}
\subjclass[2010]{05C25, 17D92, 17D99, 05C81}
\keywords{Evolution Algebra, Random walk, Graph} 
\begin{document}


%
%









\maketitle

\begin{abstract}
Evolution algebras are a new type of non-associative algebras which are inspired from biological phenomena. A special class of such algebras, called Markov evolution algebras, is strongly related to the theory of discrete time Markov chains. The winning of this relation is that many results coming from Probability Theory may be stated in the context of Abstract Algebra. In this paper we explore the connection between evolution algebras, random walks and graphs. More precisely, we study the relationships between the evolution algebra induced by a random walk on a graph and the evolution algebra determined by the same graph. Given that any Markov chain may be seen as a random walk on a graph we believe that our results may add a new landscape in the study of Markov evolution algebras.

\end{abstract}




\section{Introduction} \label{intro}

Evolution algebras are a special class of non-associative algebras introduced by Tian in \cite{tian,tian3} as an algebraic way to mimic the self-reproduction of alleles in non-Mendelian genetics. More than ten years have passed since the first papers on this topic appeared in Mathematics literature, and a lot of research effort has been devoted to explore the connections between this abstract object and concepts of other fields. We refer the reader to \cite{camacho/gomez/omirov/turdibaev/2013,casado/molina/velasco/2016,Casas/Ladra/2014,Casas/Ladra/Rozikov/2011,tian} for a survey of properties and results of general evolution algebras; to \cite{PMP,PMP2,Elduque/Labra/2015,nunez/2013,nunez/2014} for a connection between evolution algebras and graphs; and to \cite{Falcon/Falcon/Nunez/2017,Labra/Ladra/Rozikov/2014,ladra/rozikov/2013,tian3,Rozikov/Murodov/2014} for a review of results with relevance in genetics and other applications. 

In this work we are interested in studying evolution algebras related to graphs, in a sense to be specified later. The motivation to deal with this mathematical objects is due to the fact that the references cited above, and references therein, present a wide range of connections between them and many other branches of Mathematics. In particular, the strong connection to Probability Theory suggest that new methods in Applied Mathematics could be expected as a consequence of further investigation in this field.

\smallskip
An evolution algebra is defined as follows.

\begin{defn}\label{def:evolalg}
Let $\A:=(\A,\cdot\,)$ be an algebra over a field $\mathbb{K}$. We say that $\A$ is an evolution algebra if it admits a countable basis $S:=\{e_1,e_2,\ldots , e_n,\ldots\}$, such that
\begin{equation}\label{eq:ea}
\begin{array}{ll}
e_i \cdot e_i = \sum_{k} c_{ik} e_k,&\text{for any }i,\\[.2cm]
e_i \cdot e_j =0,&\text{if }i\neq j.
\end{array}
\end{equation} 
The scalars $c_{ik}\in \mathbb{K}$ are called the structure constants of $\mathcal{A}$ relative to $S$.
\end{defn}

\smallskip
A basis $S$ satisfying \eqref{eq:ea} is called natural basis of $\mathcal{A}$. $\mathcal{A}$ is real if $\mathbb{K}=\mathbb{R}$, and it is nonnegative if it is real and the structure constants $c_{ik}$ are nonnegative. In addition, if $0\leq c_{ik}\leq 1$, and 
$$\sum_{k=1}^{\infty}c_{ik}=1,$$
for any $i,k$, then $\A$ is called {\it Markov evolution algebra}. The name is due to that there is an interesting one-to-one correspondence between $\A$ and a discrete time Markov chain $(X_n)_{n\geq 0}$ with state space $\{x_1,x_2,\ldots,x_n,\ldots\}$ and transition probabilities given by $(c_{ik})_{i,k\geq 1}$, i.e., for $i,k\in\{1,2,\ldots\}$:
\begin{equation}\label{eq:tranprob}
\nonumber c_{ik}=\mathbb{P}(X_{n+1}=x_k|X_{n}=x_i),\end{equation}
for any $n\geq 0$. For the sake of completeness we remember that a discrete-time Markov chain can be thought of as a sequence of random variables $X_0, X_1, X_2, \ldots, X_n, \ldots$ defined on the same probability space, taking values on the same set $\mathcal{X}$, and such that the Markovian property is satisfied, i.e., for any set of values $\{i_0, \ldots, i_{n-1},x_n, x_{k}\} \subset \mathcal{X}$, and any $n\in \mathbb{N}$, it holds
$$\mathbb{P}(X_{n+1}=x_k|X_0 = i_0, \ldots, X_{n-1}=i_{n-1}, X_{n}=x_i)=\mathbb{P}(X_{n+1}=x_k|X_{n}=x_i).$$ 
We refer the reader to \cite{karlin/taylor,ross} for a review of Markov chains. Notice that in the correspondence between the evolution algebra $\A$ and the Markov chain $(X_n)_{n\geq 0}$ what we have is each state of $\mathcal{X}$ identified with a generator of $S$. 

\smallskip
Perhaps the main contribution of the correspondence between Markov chains and evolution algebras is that many problems coming from applied sciences, which may be currently solved by mean of probabilistic methods, i.e. stochastic processes; it can be interpreted through techniques of non-associative algebras. The bridge between these two fields is established and explored by Tian in the only book of this beautiful subject, \cite{tian}, where the author formulates theorems of Markov chain theory in the context of evolution algebras and related operators. The book also includes a review of examples and applications, as well as different open problems in this area of research. One of the open questions is what is the relationship between the evolution algebra induced by a random walk on a graph, which is a special type of Markov chain, and the evolution algebra determined by the same graph. The purpose of our work is to answer this question by looking for the existence of isomorphisms between these structures. As far as we know, this question has not been addressed yet (see \cite{tian2}). Our results cover a wide range of finite and infinite graphs, including the families of finite graphs that were recently considered by \cite{nunez/2014}. Indeed, we point out that this work provides a contribution to the theory initiated by \cite{nunez/2013,nunez/2014}. 

\smallskip
The paper is organized as follows. Section 2 is devoted to preliminary definitions and examples. Section 3 includes the main results of our work, which are subdivided into three parts. The first part is related to the existence of isomorphisms between the evolution algebras of interest, when the underlying graph is a regular or a complete bipartite graph. In the second part we show some examples of graphs where the only homomorphism is the null map. This is the case of most path, friendship, and wheel graphs. Finally we discuss the case of complete $n$-partite graphs, which may be an interesting issue of further research.

\section{Evolution algebras, random walks and graphs}

At first we consider the definition of evolution algebra associated to a graph introduced by \cite{tian}, and studied recently by \cite{nunez/2013,nunez/2014}. Then, we consider the evolution algebra of a random walk on a graph. As the random walk is a special type of discrete time Markov chain the induced algebra is just the associated Markov evolution algebra.  

\subsection{Evolution algebra of a graph}  

Lets start with some notation regarding graph theory. We refer the reader to \cite{Bondy} for a general reference on Graph Theory. A graph $G$ with $n$ vertices ($n$ may be infinite) is a pair $(V,E)$ where $V:=\{1,\ldots,n\}$ is the set of vertices and $E:=\{(i,j)\in V\times V:i\leq j\}$ is the set of edges. If $(i,j)\in E$ we say that $i$ and $j$ are neighbors. In the notation above we assume $i\leq j$ for the sake of simplicity; this means that we are considering undirected, or simple, graphs and the existence of loops (i.e., if $i=j$). However, we point out that our results may be extended to directed graphs without further work. In addition, we let $A=(a_{ij})$ the adjacency matrix of $G$, i.e. 
\[   
a_{ij}=\left\{
\begin{array}{ll}
1,&\text{if }(i,j)\in E \text{ or }(j,i)\in E,\\[.2cm]
0,&\text{other case.}
\end{array}\right.
\]

As we consider undirected graphs we have $a_{ij}=a_{ji}$, for $i,j\in V$. Note that two vertices $i$ and $j$ are neighbors if $a_{ij}=1$. In what follows we shall consider locally finite graphs, i.e., the number of neighbors of any vertex is finite. 
This assumption is important when considering the random walk on such graph.

\smallskip
The evolution algebra of a graph $G$ is defined by \cite[Section 6.1]{tian} as follows.

\smallskip
\begin{defn}\label{def:eagraph}
Let $G=(V,E)$ a graph with adjacency matrix given by $A=(a_{ij})$. The evolution algebra associated to $G$ is the evolution algebra $\A(G)$ with natural basis $S=\{e_i: i\in V\}$, and relations

\[
 \begin{array}{ll}\displaystyle
e_i \cdot e_i = \sum_{k\in V}^{n} a_{ik} e_k,&\text{for  }i \in  V,\\[.3cm]
\end{array}
\] 
\noindent
and $e_i \cdot e_j =0,\text{if }i\neq j.$
\end{defn}

\smallskip
\begin{exa} \label{ex:complete} The complete graph is an undirected graph in which every pair of different vertices is connected by a unique edge (see Figure \ref{FIG:complete}). 

\begin{figure}[h]
\label{FIG:complete}
\begin{center}
\begin{tikzpicture}[scale=0.4]
\GraphInit[vstyle=Simple]
    \SetGraphUnit{1}
    \tikzset{VertexStyle/.style = {shape = circle,fill = black,minimum size = 2pt,inner sep=1.5pt}}
  \begin{scope}[xshift=12cm]
  \grComplete[RA=4.5]{8}
  \end{scope}
\end{tikzpicture}
\end{center}
\caption{Complete graph with $8$ vertices, $K_8$.}
\end{figure}
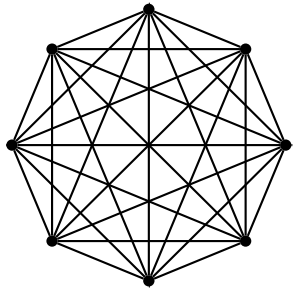

\noindent
Let $K_n$ be the complete graph with $n$ vertices. Then $\mathcal{A}(K_n)$ is the  evolution algebra with set of generators $\{e_1, e_{2}, \ldots,  e_{n}\}$ and relations 
\[
\begin{array}{ll}\displaystyle
 e_i^2=\sum_{\substack{j=1,\\j \neq i}}^n e_j,& \text{for }i\in\{1,2,\ldots,n\},\\[.2cm]
\end{array}
\]
and $e_i \cdot e_j =0,$  for $i\neq j.$
\end{exa}

\smallskip
\begin{exa} \label{ex:tree} The $d$-dimensional homogeneous tree $\mathbb{T}_d$ is an undirected infinite graph in which every vertex has degree $d+1$, and every pair of different vertices is connected by a unique path, i.e., a sequence of neighbors vertices (see Figure \ref{FIG:tree}(a)).
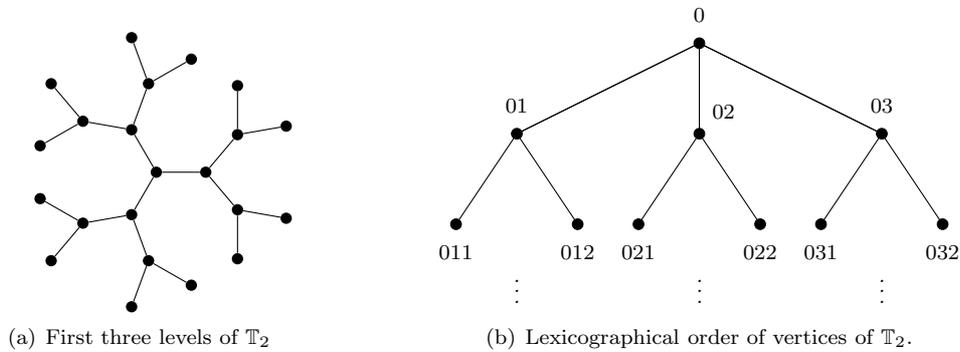
\begin{figure}[h]
\label{FIG:tree}
\begin{center}

\subfigure[][First three levels of $\mathbb{T}_{2}$]{

\tikzstyle{level 1}=[sibling angle=120]
\tikzstyle{level 2}=[sibling angle=100]
\tikzstyle{level 3}=[sibling angle=80]
\tikzstyle{every node}=[fill]

\tikzstyle{edge from parent}=[segment length=0.8mm,segment angle=10,draw]
\begin{tikzpicture}[scale=0.5,grow cyclic,shape=circle,minimum size = 2pt,inner sep=1.5pt,level distance=13mm,
                    cap=round]
\node {} child [\A] foreach \A in {black,black,black}
    { node {} child [color=\A!50!\B] foreach \B in {black,black}
        { node {} child [color=\A!50!\B!50!\C] foreach \C in {black,black}
            { node {} }
        }
    };
    \end{tikzpicture}
}
\qquad \qquad
\subfigure[][Lexicographical order of vertices of $\mathbb{T}_2$.]{

\begin{tikzpicture}[scale=0.8]

\draw (0,0) -- (-3,-1.5)--(-4,-3);
\draw (0,0) -- (-3,-1.5)--(-2,-3);
\draw (0,0) -- (0,-1.5)--(-1,-3);
\draw (0,0) -- (0,-1.5)--(1,-3);
\draw (0,0) -- (3,-1.5)--(4,-3);
\draw (0,0) -- (3,-1.5)--(2,-3);

\draw (-3,-3.5) node[below,font=\footnotesize] {$\vdots$};
\draw (0,-3.5) node[below,font=\footnotesize] {$\vdots$};
\draw (3,-3.5) node[below,font=\footnotesize] {$\vdots$};


\filldraw [black] (0,0) circle (2.5pt);
\draw (0,0.2) node[above,font=\footnotesize] {$0$};
\filldraw [black] (0,-1.5) circle (2.5pt);
\draw (-3,-1.3) node[above,font=\footnotesize] {${01}$};
\filldraw [black] (3,-1.5) circle (2.5pt);
\draw (0.4,-1.4) node[above,font=\footnotesize] {${02}$};
\filldraw [black] (-3,-1.5) circle (2.5pt);
\draw (3,-1.3) node[above,font=\footnotesize] {${03}$};

\filldraw [black] (-4,-3) circle (2.5pt);
\draw (-4,-3.2) node[below,font=\footnotesize] {${011}$};
\filldraw [black] (-2,-3) circle (2.5pt);
\draw (-2,-3.2) node[below,font=\footnotesize] {${012}$};
\filldraw [black] (-1,-3) circle (2.5pt);
\draw (-1,-3.2) node[below,font=\footnotesize] {${021}$};
\filldraw [black] (1,-3) circle (2.5pt);
\draw (1,-3.2) node[below,font=\footnotesize] {${022}$};
\filldraw [black] (2,-3) circle (2.5pt);
\draw (2,-3.2) node[below,font=\footnotesize] {${031}$};
\filldraw [black] (4,-3) circle (2.5pt);
\draw (4,-3.2) node[below,font=\footnotesize] {${032}$};

\end{tikzpicture}
}

\end{center}
\caption{$2$-dimensional homogeneous tree $\mathbb{T}_2$.}
\end{figure}

\noindent
In this case, it is more fruitful to use the lexicographical order to label the vertices: we use $0$ for a vertex usually identified as the root of the tree, and we imagine the tree as growing upwards away from its root. We let $0\,1,0\,2,\ldots,0\, (d+1)$ those vertices connected through an edge to the root; $011,012,\ldots, 01d$ are the vertices connected to the vertex $01$, which  are further from the root, and so on (see Figure \ref{FIG:tree}(b)). Then $\mathcal{A}(\mathbb{T}_d)$ is the evolution algebra with the infinite set of generators $\{e_0, e_{0\,1},e_{0\,2},\ldots,e_{0\,(d+1)},e_{011},e_{012},\ldots,e_{01d},e_{021}, \ldots \}$ and relations:
\[
\begin{array}{rcl}\displaystyle
e_0^2 & =&\displaystyle \sum_{j=1}^{d+1} e_{0j},\\[.3cm]
 e_{0\,i_1\,i_2\,\ldots \,i_{k+1}}^2&=&e_{0\, i_1\,i_2\,\ldots \,i_{k}} +\displaystyle \sum_{\substack{j=1}}^{d} e_{0\, i_1\,i_2\,\ldots \,i_{k+1}\,j}, \text{ for }k\in \mathbb{Z}^{+}\\[.3cm]
\end{array}
\]
where $e_{0\,i_1\,i_2\,\ldots \,i_{k}} :=e_{0}$ whether $k=0$, and $e_{\sigma} \cdot e_\nu =0,$ such that $\sigma\neq \nu.$
\end{exa}

\smallskip
We refer the reader to \cite{nunez/2013,nunez/2014} for a review of evolution algebras associated to well-known families of finite graphs. We point out that almost every evolution algebra included in our work has been previously collected in \cite{nunez/2013,nunez/2014}. 


\subsection{Evolution algebra of a random walk on a graph}

The random walk on $G=(V,E)$ is a discrete time Markov chain $(X_n)_{n\geq 0}$ with state space given by $V$ and transition probabilities given by
$$p_{ik}=\frac{a_{ik}}{d_i},$$
where $i,k\in V$ and
$$d_i:=\sum_{k\in V} a_{ik},$$
is the number of neighbors of vertex $i$. In other words, the sequence of random variables $(X_n)_{n\geq 0}$ denotes the set of positions of a particle walking around the vertices of $G$, where each new position is selected at random from the set of neighbors of the current position. From this definition of random walk, we can consider its related Markov evolution algebra.

\smallskip
\begin{defn}
Let $G=(V,E)$ be a graph with adjacency matrix given by $A=(a_{ij})$. We define the evolution algebra associated to the random walk on $G$ as the evolution algebra $\A_{RW}(G)$ with natural basis $S=\{e_i: i\in V\}$, and relations given by
\[
\begin{array}{ll}\displaystyle
e_i \cdot e_i = \sum_{k\in V}\left( \frac{a_{ik}}{d_i}\right)e_k,&\text{for }i  \in V,
\end{array}
\] 
\noindent
and $e_i \cdot e_j =0, \text{ if } i\neq j.$
\end{defn}

\begin{exa} Let $K_n$ be the complete graph with $n$ vertices considered in Example \ref{ex:complete}. Then $\mathcal{A}_{RW}(K_n)$ is the evolution algebra with set of generators $\{e_1,e_2,\ldots, e_n\}$ and relations:
\[
\begin{array}{ll}\displaystyle
 e_i^2=\frac{1}{n-1} \,\sum_{\substack{j=1,\\j \neq i}}^n e_j,& \text{ for }i\in\{1,2,\ldots,n\},\\[.2cm]
\end{array}
\]
and $e_i \cdot e_j =0,$  for $i\neq j.$
\end{exa} 

\begin{exa}
Let $\mathbb{T}_d$ be the $d$-dimensional homogeneous tree considered in Example \ref{ex:tree}. Then $\mathcal{A}_{RW}(\mathbb{T}_d)$ is the evolution algebra with set of generators $\{e_0, e_{0\,1},\ldots,e_{0\,(d+1)},e_{011},\ldots,e_{01d},e_{021}, \ldots \}$ and relations:
\[
\begin{array}{rcl}\displaystyle
e_0^2 & =&\displaystyle\frac{1}{d+1}\left(\displaystyle \sum_{j=1}^{d+1} e_{0j}\right),\\[.5cm]
 e_{0\,i_1\,i_2\,\ldots \,i_{k+1}}^2&=&\displaystyle\frac{1}{d+1}\left(e_{0\, i_1\,i_2\,\ldots \,i_{k}} + \displaystyle\sum_{\substack{j=1}}^{d} e_{0\, i_1\,i_2\,\ldots \,i_{k+1}\,j}\right), \text{ for }k\in \mathbb{Z}^{+}\\[.2cm]
\end{array}
\]
where $e_{0\,i_1\,i_2\,\ldots \,i_{k}} :=e_{0}$ whether $k=0$, and $e_{\sigma} \cdot e_\nu =0,$ such that $\sigma\neq \nu.$

\end{exa}

\section{On the existence of isomorphisms between $\A(G)$ and $\A_{RW}(G)$}

The purpose of this work is to explore the connection between the algebras $\A(G)$ and $\A_{RW}(G)$, for a given graph $G$. As mentioned in the Introduction, this is one of the open problems stated in \cite[Chapter 6]{tian}, and more recently in \cite{tian2}. In order to do it, we consider the following definition given by \cite[Section 3.1]{tian}. 

\smallskip
\begin{defn}
Let $\mathcal{A}$ and $\tilde{\mathcal{A}}$ be $\mathbb{K}$-evolution algebras and $S=\{e_1,e_2,\ldots,e_n,\ldots\}$ a natural basis for $\mathcal{A}$. We say that a $\mathbb{K}$-linear map $g: \mathcal{A} \longrightarrow \tilde{\mathcal{A}}$ is an homomorphism of evolution algebras if it is an algebraic map and if the set $\{g(e_1),\ldots,g(e_n)\}$ can be complemented to a natural basis of  $\tilde{\mathcal{A}}$. In addition, if $g$ is bijective, then we say that it is an isomorphism.
\end{defn}

In the previous definition, $g$ is an algebraic map in the sense that $g$ preserves products; i.e., for any $e_i, e_j \in S$ we have $g(e_i \cdot e_j)= g(e_i) \cdot g(e_j)$.

\subsection{Regular and complete bipartite graphs}

Our first result states that the evolution algebra induced by the random walk on a graph and the evolution algebra determined by the same graph are isomorphic as evolution algebras provided the graph is well-behaved in some sense. We shall consider first the case of regular graphs, i.e., any vertex has exactly the same number of neighbors. Notice that a complete graph and an  homogeneous tree are examples of regular graphs, see Examples \ref{ex:complete} and \ref{ex:tree}. Next we analyze the case of a complete bipartite graph $K_{m,n}$, where the set of vertices  can be partitioned into two subsets, of sizes $m$ and $n$, such that there is no edge connecting two vertices in the same subset, and every possible edge that could connect vertices in different subsets is part of the graph, see Figure \ref{fig:bcomplete}.

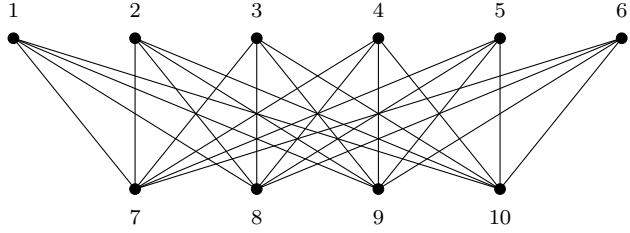
\begin{figure}[h]\label{fig:bcomplete}
\begin{center}
\begin{tikzpicture}[scale=0.8]

\draw (-5,0) -- (-3,-2.5);
\draw (-5,0) -- (-1,-2.5);
\draw (-5,0) -- (1,-2.5);
\draw (-5,0) -- (3,-2.5);

\draw (-3,0) -- (-3,-2.5);
\draw (-3,0) -- (-1,-2.5);
\draw (-3,0) -- (1,-2.5);
\draw (-3,0) -- (3,-2.5);

\draw (-1,0) -- (-3,-2.5);
\draw (-1,0) -- (-1,-2.5);
\draw (-1,0) -- (1,-2.5);
\draw (-1,0) -- (3,-2.5);

\draw (1,0) -- (-3,-2.5);
\draw (1,0) -- (-1,-2.5);
\draw (1,0) -- (1,-2.5);
\draw (1,0) -- (3,-2.5);

\draw (3,0) -- (-3,-2.5);
\draw (3,0) -- (-1,-2.5);
\draw (3,0) -- (1,-2.5);
\draw (3,0) -- (3,-2.5);

\draw (5,0) -- (-3,-2.5);
\draw (5,0) -- (-1,-2.5);
\draw (5,0) -- (1,-2.5);
\draw (5,0) -- (3,-2.5);

\filldraw [black] (-5,0) circle (2.5pt);
\draw (-5,0.2) node[above,font=\footnotesize] {$1$};
\filldraw [black] (-3,0) circle (2.5pt);
\draw (-3,0.2) node[above,font=\footnotesize] {$2$};
\filldraw [black] (-1,0) circle (2.5pt);
\draw (-1,0.2) node[above,font=\footnotesize] {$3$};
\filldraw [black] (1,0) circle (2.5pt);
\draw (1,0.2) node[above,font=\footnotesize] {$4$};
\filldraw [black] (3,0) circle (2.5pt);
\draw (3,0.2) node[above,font=\footnotesize] {$5$};
\filldraw [black] (5,0) circle (2.5pt);
\draw (5,0.2) node[above,font=\footnotesize] {$6$};

\filldraw [black] (-3,-2.5) circle (2.5pt);
\draw (-3,-2.7) node[below,font=\footnotesize] {$7$};
\filldraw [black] (-1,-2.5) circle (2.5pt);
\draw (-1,-2.7) node[below,font=\footnotesize] {$8$};
\filldraw [black] (1,-2.5) circle (2.5pt);
\draw (1,-2.7) node[below,font=\footnotesize] {$9$};
\filldraw [black] (3,-2.5) circle (2.5pt);
\draw (3,-2.7) node[below,font=\footnotesize] {$10$};

\end{tikzpicture}
\caption{Complete bipartite graph $K_{6,4}$.}
\end{center}
\end{figure}

\smallskip
\begin{theorem}\label{teo:iso}
$\A(G)$ and $\A_{RW}(G)$ are isomorphic as evolution algebras in the following cases.

\begin{enumerate}[i.]
\item $G=G_d$ is a $d$-regular graph, for $d\geq 1$;
\item $G=K_{m,n}$ is the complete bipartite graph with partitions of sizes $m$ and $n$, for $m,n\geq 1$.

\end{enumerate}
\end{theorem}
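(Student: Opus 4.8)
The plan is to exhibit, in each case, an explicit change of natural basis $f_i = \lambda_i e_i$ (with $\lambda_i \in \mathbb{K}$, $\lambda_i \neq 0$) of $\A(G)$ that transforms the structure constants of $\A(G)$ into those of $\A_{RW}(G)$, and then invoke the fact that scaling the elements of a natural basis by nonzero scalars again yields a natural basis, so the induced $\mathbb{K}$-linear bijection is an evolution algebra isomorphism. The point is that $e_i \cdot e_j = 0$ for $i \neq j$ is preserved automatically by any such diagonal change of basis, so the only thing to check is that the diagonal relations match: if $e_i^2 = \sum_k a_{ik} e_k$ in $\A(G)$, then $f_i^2 = \lambda_i^2 e_i^2 = \sum_k \lambda_i^2 a_{ik} e_k = \sum_k (\lambda_i^2 a_{ik}/\lambda_k) f_k$, so we need $\lambda_i^2 a_{ik}/\lambda_k = a_{ik}/d_i$ whenever $a_{ik} = 1$, i.e. $\lambda_i^2 d_i = \lambda_k$ for every edge $(i,k)$.

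For part (i), $G$ is $d$-regular, so $d_i = d$ for all $i$, and the condition becomes $\lambda_i^2 d = \lambda_k$ for all edges $(i,k)$. Taking $\lambda_i = 1/d$ for every $i$ solves this (since $\lambda_i^2 d = (1/d^2)d = 1/d = \lambda_k$, assuming $\operatorname{char}\mathbb{K}$ does not divide $d$, which is implicit since $1/d$ must exist for $\A_{RW}(G)$ to be defined in the first place); when $d=1$ the two algebras already coincide. So the map $g\colon \A(G_d) \to \A_{RW}(G_d)$ determined by $g(e_i) = \tfrac1d e_i$ is the desired isomorphism, and I would verify directly that it is algebraic and bijective.

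For part (ii), $G = K_{m,n}$ with parts $U$ (size $m$) and $W$ (size $n$): here $d_i = n$ for $i \in U$ and $d_j = m$ for $j \in W$, and every edge joins a vertex of $U$ to a vertex of $W$. The condition $\lambda_i^2 d_i = \lambda_k$ for edges $(i,k)$ now reads: for $i \in U$, $k \in W$, we need $\lambda_i^2 n = \lambda_k$ and simultaneously $\lambda_k^2 m = \lambda_i$. Setting $\lambda_i = \alpha$ for all $i \in U$ and $\lambda_j = \beta$ for all $j \in W$, this is the pair $\alpha^2 n = \beta$, $\beta^2 m = \alpha$. Substituting gives $\alpha^4 n^2 m = \alpha$, hence $\alpha^3 = 1/(n^2 m)$, so $\alpha = (n^2 m)^{-1/3}$ and $\beta = n\alpha^2 = (n/m^2)^{1/3}$ — provided such a cube root exists in $\mathbb{K}$. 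This last proviso is the one delicate point: over $\mathbb{R}$ every real number has a real cube root, so it is fine, but the statement as given is over a general field $\mathbb{K}$, so I would either restrict to $\mathbb{K} = \mathbb{R}$ (consistent with "real" and "Markov" algebras being the setting of interest) or explicitly assume $n^2 m$ has a cube root in $\mathbb{K}$. Once $\alpha, \beta$ are fixed, the map $g(e_i) = \alpha e_i$ for $i \in U$, $g(e_j) = \beta e_j$ for $j \in W$ is checked to be algebraic and bijective exactly as before.

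The main obstacle is thus not the algebra of the verification — which is a routine diagonal-scaling computation — but pinning down the right scalars and the arithmetic hypotheses on $\mathbb{K}$ under which they exist (existence of $1/d$ in case (i), and of a suitable cube root in case (ii)); modulo that, the proof is a direct construction followed by a short check that a diagonal rescaling of a natural basis is again a natural basis and that the resulting linear bijection preserves products.
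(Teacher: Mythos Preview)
Your approach is exactly the paper's: a diagonal rescaling $e_i \mapsto c_i\,e_i$ of the natural basis, with the scalars determined by the compatibility condition along each edge. There is, however, a directional slip when you write down the final maps. Having found a basis $\{f_i = \lambda_i e_i\}$ of $\A(G)$ whose structure constants coincide with those of $\A_{RW}(G)$, the induced isomorphism $\A(G)\to\A_{RW}(G)$ sends $f_i\mapsto e_i$, i.e.\ $e_i\mapsto \lambda_i^{-1}e_i$, not $e_i\mapsto \lambda_i e_i$. Concretely, in the $d$-regular case your map $g(e_i)=\tfrac{1}{d}\,e_i$ is not multiplicative: in $\A_{RW}(G_d)$ one has
\[
g(e_i)\cdot g(e_i)=\frac{1}{d^2}\sum_k\frac{a_{ik}}{d}\,e_k=\frac{1}{d^3}\sum_k a_{ik}\,e_k,
\qquad
g(e_i^2)=\frac{1}{d}\sum_k a_{ik}\,e_k,
\]
which agree only for $d=1$. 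The paper uses $g(e_i)=d\,e_i$, which is precisely $\lambda_i^{-1}$ with your $\lambda_i=1/d$, and this does work. Likewise in part (ii) the correct scalars are $\alpha^{-1}=m^{1/3}n^{2/3}$ on the first part and $\beta^{-1}=m^{2/3}n^{1/3}$ on the second, again matching the paper's choice. With that inversion your argument is complete and identical in substance to the paper's proof. Your remarks on the field hypotheses (invertibility of $d$, existence of cube roots) are a welcome addition; the paper tacitly works over $\mathbb{R}$ at this point.
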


\begin{proof}
\smallskip
\noindent
{\it i.} Assume that $G_d=(V,E)$ is a $d$-regular graph, i.e. $d_i=d$, for any $i\in V$. The induced evolution algebras $\A(G_d)$ and $\A_{RW}(G_d)$ are obtained by considering the set of generators $\{e_i, i\in V\}$ and relations:
\smallskip
$$
\mathcal{A}(G_d):\left\{
\begin{array}{cl}
 e_i^2=\displaystyle \sum_{j\in V}a_{ij} e_{j}, &  \text{ for } i\in V,\\[.2cm]
 e_i \cdot e_j =0, & \text{ for } i\neq j,
\end{array}\right.
$$
and 
\smallskip
$$
\mathcal{A}_{RW}(G_d):\left\{
\begin{array}{cl}
 e_i^2= \displaystyle \sum_{j\in V}\frac{a_{ij}}{d} e_{j}, &  \text{ for } i\in V,\\[.2cm]
 e_i \cdot e_j =0,& \text{ for }i\neq j.
\end{array}\right.
$$

Consider the $\mathbb{R}$-linear transformation $g:\A(G)\longrightarrow \A_{RW}(G)$ defined by $g(e_i)=d \, e_i$ for $i\in V$. Thus defined it is not difficult to see that $g$ is an evolution homomorphism and, since $g$ send a basis of $\A(G_d)$ into a basis of $\A_{RW}(G_d)$, it is an evolution isomorphism.


\smallskip
\noindent
{\it ii.} Let $G=K_{m,n}$, for $m,n\geq 1$  be a complete bipartite graph with partitions of sizes $m$ and $n$. In other words, the set of vertices of $K_{m,n}$ can be partitioned into two subsets, say $V_1:=\{1,\ldots,m\}$ and $V_2:=\{m+1,\ldots,m+n\}$, such that there is no edge connecting two vertices in the same subset, and every possible edge that could connect vertices in different subsets is part of the graph. The resulting evolution algebras associated to $K_{m,n}$ are given by the set of generators $\{e_1,\ldots,e_m,e_{m+1},\ldots e_{m+n}\}$ and relations:
\smallskip
$$
\mathcal{A}(K_{m,n}):\left\{
\begin{array}{cl}
 e_i^2=\displaystyle \sum_{j=1}^n e_{m+j}, &  \text{ for } i\in\{1,2,\ldots,m\},\\[.5cm]
  e_i^2=\displaystyle \sum_{j=1}^m e_{j}, & \text{ for } i\in \{m+1,m+2,\ldots,m+n\},\\[.5cm]
 e_i \cdot e_j =0, & \text{ for } i\neq j,
\end{array}\right.
$$
and 
\smallskip
$$
\mathcal{A}_{RW}(K_{m,n}):\left\{
\begin{array}{cl}
 e_i^2= \displaystyle\sum_{j=1}^n \frac{1}{n}e_{m+j}, & \text{ for } i\in\{1,2,\ldots,m\},\\[.5cm]
 e_i^2=\displaystyle \sum_{j=1}^m \frac{1}{m}e_{j}, & \text{ for } i\in \{m+1,m+2,\ldots,m+n\},\\[.5cm]
 e_i \cdot e_j =0,& \text{ for }i\neq j.
\end{array}\right.
$$

\smallskip
Let $g:\A(K_{m,n})\longrightarrow \A_{RW}(K_{m,n})$ be a $\mathbb{R}$-linear transformation defined by
$$
g(e_i)=\left\{
\begin{array}{ll}
m^{1/3}n^{2/3} e_i, &\text{for }i\in\{1,2,\ldots,m\};\\[.2cm]
m^{2/3}n^{1/3} e_i, &\text{for }i\in \{m+1,m+2,\ldots,m+n\}.
\end{array}\right.
$$
It is not difficult to see that $g$ is an evolution homomorphism and, since $g$ send a basis of $\A(K_{m,n})$ into a basis of $\A_{RW}(K_{m,n})$, it is an evolution isomorphism.

\end{proof}

Theorem \ref{teo:iso} holds for any regular graph, finite or infinite, including snark and Petersen graphs whose evolution algebras where introduced by \cite{nunez/2014}. Theorem \ref{teo:iso} is also true whenever we consider a complete bipartite graph, like a star graph ($K_{1,n}$) or a utility graph ($K_{3,3}$). We point out that with this result one can study the evolution algebra of a random walk on a graph in order to understand the evolution algebra of the same graph, provided the graph belongs to one of the families covered by Theorem \ref{teo:iso}. One can accomplish this task by combining well known results of random walks on graphs together with the connections between evolution algebras and Markov chains stated by \cite[Chapter 4]{tian}.

\subsection{Path, friendship and wheel graphs}

In this section we list some graphs for which $\mathcal{A}(G)$ and $\mathcal{A}_{RW}(G)$ are not isomorphic as evolution algebras. Further, our results are stronger in the sense that we shall prove that the only evolution homomorphism between these algebras is the null map. 



Let us consider as underlying graph the path graph with $n$ vertices, denoted by $A_n$, where each vertex $i$ is connected to $i-1$ and $i+1$, for $i\in \{2,\ldots,n-1\}$, and $1$ is connected only to $2$ while $n$ is connected only to $n-1$ (see Figure \ref{fig:interval}).

\smallskip
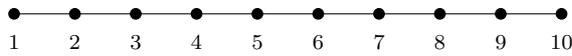
\begin{figure}[h]
\label{FIG:FP}
\begin{center}
\begin{tikzpicture}[scale=0.8]

\draw (-3,-2) -- (6,-2);
\draw (-3,-2.2) node[below,font=\footnotesize] {$1$};


\draw [very thick] (-3,-2) circle (2pt);
\filldraw [black] (-3,-2) circle (2pt);
\draw (-2,-2.2) node[below,font=\footnotesize] {$2$};
\draw [very thick] (-2,-2) circle (2pt);
\filldraw [black] (-2,-2) circle (2pt);
\draw [very thick] (-1,-2) circle (2pt);
\filldraw [black] (-1,-2) circle (2pt);
\draw (-1,-2.2) node[below,font=\footnotesize] {$3$};
\draw [very thick] (0,-2) circle (2pt);
\filldraw [black] (0,-2) circle (2pt);
\draw (0,-2.2) node[below,font=\footnotesize] {$4$};
\draw [very thick] (1,-2) circle (2pt);
\filldraw [black] (1,-2) circle (2pt);
\draw (1,-2.2) node[below,font=\footnotesize] {$5$};
\draw [very thick] (2,-2) circle (2pt);
\filldraw [black] (2,-2) circle (2pt);
\draw (2,-2.2) node[below,font=\footnotesize] {$6$};
\draw [very thick] (3,-2) circle (2pt);
\filldraw [black] (3,-2) circle (2pt);
\draw (3,-2.2) node[below,font=\footnotesize] {$7$};
\draw [very thick] (4,-2) circle (2pt);
\filldraw [black] (4,-2) circle (2pt);
\draw (4,-2.2) node[below,font=\footnotesize] {$8$};
\draw [very thick] (5,-2) circle (2pt);
\filldraw [black] (5,-2) circle (2pt);
\draw (5,-2.2) node[below,font=\footnotesize] {$9$};
\draw [very thick] (6,-2) circle (2pt);
\filldraw [black] (6,-2) circle (2pt);
\draw (6,-2.2) node[below,font=\footnotesize] {$10$};

\end{tikzpicture}
\end{center}
\label{fig:interval}
\caption{Path graph $A_{10}$.}
\end{figure}

As a consequence of Theorem \ref{teo:iso}(ii) we have that  $\A(A_3) \cong \A_{RW}(A_3)$ as evolution algebras. Indeed, notice that  $A_3 \cong K_{1,2}$ as graphs. We shall show that $\A(A_n) \ncong \A_{RW}(A_n)$ as evolution algebras, for $n>3$.   

\smallskip
\begin{prop}\label{prop:path}
Let  $A_n$ be a path graph, with $n>3$. Then, the only evolution homomorphism between $\A(A_n)$ and $\A_{RW}(A_n)$ is the null map. In particular, $\A(A_n) \ncong \A_{RW}(A_n)$ as evolution algebras.
\end{prop}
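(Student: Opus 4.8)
The plan is to write down a general evolution homomorphism $g:\A(A_n)\to \A_{RW}(A_n)$ in coordinates with respect to the natural bases and show that the defining relations force every coefficient to vanish. First I would recall that both algebras have the same natural basis $\{e_1,\dots,e_n\}$, with $\A(A_n)$ given by $e_1^2=e_2$, $e_n^2=e_{n-1}$, and $e_i^2=e_{i-1}+e_{i+1}$ for $2\le i\le n-1$, while $\A_{RW}(A_n)$ replaces each $e_{i\pm1}$ by $\tfrac12 e_{i\pm1}$ except at the two endpoints $2$ and $n-1$ which keep coefficient $1$ from the degree-$1$ neighbor. A homomorphism sends $g(e_i)=\sum_{k=1}^n \alpha_{ik} e_k$; applying $g$ to $e_i\cdot e_j=0$ for $i\ne j$ gives, since products of distinct basis vectors vanish, $\sum_k \alpha_{ik}\alpha_{jk}\,\tfrac{\text{(something)}}{d_k}\,e_k$-type relations — more precisely, writing $g(e_i)\cdot g(e_j)=\sum_k \alpha_{ik}\alpha_{jk}\,(e_k\cdot e_k)$, and using that the vectors $e_k\cdot e_k$ in $\A_{RW}(A_n)$ are linearly independent when $n>3$ (this is the key structural fact about the path), we get $\alpha_{ik}\alpha_{jk}=0$ for all $k$ whenever $i\ne j$. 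So for each fixed $k$ at most one index $i$ has $\alpha_{ik}\ne0$; equivalently the images $g(e_i)$ have pairwise disjoint supports.

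Next I would use the squaring relations to pin things down. Applying $g$ to $e_i^2$ in $\A(A_n)$ must equal $g(e_i)\cdot g(e_i)=\sum_k \alpha_{ik}^2\,(e_k\cdot e_k)$ computed in $\A_{RW}(A_n)$. Expanding the right side using the $\A_{RW}$ relations and matching coefficients against the left side (which is $g$ applied to a specific sum of basis vectors, hence a known linear combination of the $\alpha$'s) yields a system of quadratic equations in the $\alpha_{ik}$. Combined with the disjoint-support conclusion, each $g(e_i)$ is a scalar multiple of a single basis vector, say $g(e_i)=\lambda_i e_{\sigma(i)}$ for some (partial) function $\sigma$ and scalars $\lambda_i$; then the squaring relation for an interior vertex $i$ reads $\lambda_i^2\,(e_{\sigma(i)}\cdot e_{\sigma(i)}) = \lambda_{i-1}e_{\sigma(i-1)}+\lambda_{i+1}e_{\sigma(i+1)}$ (with appropriate endpoint modifications), and the left side, expanded in $\A_{RW}$, is a sum over the neighbors of $\sigma(i)$ with the random-walk weights. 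Comparing the \emph{number} of nonzero terms and the \emph{ratios} of coefficients on the two sides — the left side has random-walk weights summing to a number determined by $\lambda_i^2$, while the right side must reproduce those exact weights but the $\A(A_n)$-side weights are all $1$ — forces the scalars to collapse. The arithmetic obstruction is that on a path the coefficient patterns $\{1,\tfrac12\}$ versus $\{1\}$ cannot be reconciled by any single rescaling, and pushing this contradiction from the endpoints inward (or from a degree-$2$ vertex whose neighbor set under $\sigma$ cannot match any vertex's neighbor set with consistent weights) forces all $\lambda_i=0$.

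Concretely, I would argue by contradiction: suppose $g\ne0$, so some $g(e_i)\ne0$; by the disjoint-support step write $g(e_i)=\lambda_i e_{\sigma(i)}$ with $\lambda_i\ne0$ for $i$ in some nonempty set. Tracking a chain $i-1,i,i+1$ of consecutive indices, the squaring relations link $\lambda_{i-1},\lambda_i,\lambda_{i+1}$ and $\sigma(i-1),\sigma(i),\sigma(i+1)$: $e_{\sigma(i)}^2$ in $\A_{RW}$ must, after scaling by $\lambda_i^2$, equal a sum of $e_{\sigma(i\pm1)}$ with coefficient $1$ (from $\A(A_n)$). But in $\A_{RW}(A_n)$ every $e_k^2$ is $\tfrac12 e_{k-1}+\tfrac12 e_{k+1}$ for an interior, non-near-endpoint $k$, so $\lambda_i^2\cdot\tfrac12$ would have to equal $1$ on both neighbors, forcing $\lambda_i^2=2$ and $\sigma$ to map consecutive vertices to consecutive vertices; then doing the same at vertex $i+1$ forces $\lambda_{i+1}^2=2$ as well, and one propagates to an endpoint of the path where $\A_{RW}$ has a coefficient-$1$ edge and $\A(A_n)$ also has a coefficient-$1$ edge but the degrees mismatch (the endpoint of $A_n$ has degree $1$, but $\sigma$ of an interior vertex has degree $2$), producing a contradiction — the count of nonzero terms in $e_{\sigma(i)}^2$ cannot match. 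The case analysis branches over where in the path $\sigma(i)$ lands (endpoint, near-endpoint vertex $2$ or $n-1$, or deep interior), and the hypothesis $n>3$ is exactly what guarantees the existence of a vertex whose neighborhood weights are genuinely $\{\tfrac12,\tfrac12\}$ with both neighbors also interior, so that no rescaling can absorb the discrepancy.

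The main obstacle I anticipate is the bookkeeping of the endpoint and near-endpoint cases: vertices $1,2,n-1,n$ of $A_n$ behave irregularly (vertex $1$ has degree $1$, vertex $2$ sees a degree-$1$ neighbor, so the $\A_{RW}$ relation $e_2^2=e_1+\tfrac12 e_3$ has mixed coefficients), and one must carefully verify that $\sigma$ cannot route images through these special vertices either. Once the disjoint-support lemma and the linear independence of $\{e_k^2\}$ in $\A_{RW}(A_n)$ for $n>3$ are established, the rest is a finite, if slightly tedious, propagation argument; I would organize it as: (1) disjoint supports; (2) each $g(e_i)$ is a scalar times a basis vector; (3) if any $\lambda_i\ne0$ then following the squaring relations forces a coefficient equation $\tfrac12\lambda^2=1$ that cannot hold consistently along the whole path, hence all $\lambda_i=0$ and $g\equiv0$. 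The final statement $\A(A_n)\ncong\A_{RW}(A_n)$ is then immediate, since an isomorphism would in particular be a nonzero homomorphism.
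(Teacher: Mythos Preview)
Your overall strategy matches the paper's --- reduce to disjoint supports, write $g(e_i)=\lambda_i e_{\sigma(i)}$, then derive an inconsistent chain of scalar equations --- but there is a genuine gap in your first step. You claim as the ``key structural fact'' that the vectors $e_k^2$ in $\A_{RW}(A_n)$ are linearly independent for $n>3$, and use this to conclude immediately that $\alpha_{ik}\alpha_{jk}=0$ for all $k$ when $i\neq j$. This is false for odd $n$: for instance when $n=5$ one has $e_1^2 - 2e_3^2 + e_5^2 = e_2 - (e_2+e_4) + e_4 = 0$. More generally the matrix whose rows are the coordinates of $e_k^2$ is $D^{-1}A$ where $A$ is the adjacency matrix of the path; since the eigenvalues of $A$ are $2\cos\bigl(k\pi/(n+1)\bigr)$, this matrix is singular precisely when $n$ is odd. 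So from $\sum_k \alpha_{ik}\alpha_{jk}\,e_k^2=0$ you do \emph{not} get all products zero when $n$ is odd, and the disjoint-support conclusion does not follow directly. This is exactly the obstacle the paper confronts: for even $n$ the equations \eqref{eq:nova3}--\eqref{eq:nova7} telescope to give $t_{i\ell}t_{j\ell}=0$ immediately, but for odd $n$ the paper has to bring in the additional relations $g(e_i^2)=g(e_i)^2$ (equations \eqref{coe2}--\eqref{eq:novanova}) and run a four-case analysis depending on which column contains a nonzero entry, in order to recover the same conclusion. Your outline skips this entirely.

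A smaller point: you mis-state the $\A_{RW}$ relations near the endpoints, writing $e_2^2=e_1+\tfrac12 e_3$. Vertex $2$ has degree $2$, so the correct relation is $e_2^2=\tfrac12 e_1+\tfrac12 e_3$; there are no mixed-coefficient squares in $\A_{RW}(A_n)$. This makes your endpoint contradiction sketch unreliable as written. The paper's actual contradiction, once $g(e_i)=\alpha_i e_{\pi(i)}$ is established, comes from the chain $\alpha_1^2=\alpha_2$ and $\alpha_{i-1}=\tfrac{\alpha_i^2}{2}=\alpha_{i+1}$ for $2\le i\le n-1$, which for nonzero $\alpha$'s forces $\alpha_2=2$, hence $\alpha_1=\sqrt{2}$ from the first relation but $\alpha_1=\alpha_2^2/2=2$ from the second --- not the $\lambda_i^2=2$ you wrote. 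Your propagation idea is right in spirit but the numerics need fixing.
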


\begin{proof}
Consider the evolution algebras induced by $A_n$, and by the random walk on $A_n$, respectively. That is, consider the evolution algebras whose set of generators is $\{e_1,e_2,\ldots,e_n\}$ and relations are:

\smallskip
$$
\mathcal{A}(A_n): \left\{
\begin{array}{cl}
 e_1^2= e_2,&\\[.2cm]
  e_i^2=e_{i-1}+e_{i+1}, & \text{ for } i\in \{2,\ldots,n-1\},\\[.2cm]
 e_n^2=e_{n-1}, &\\[.2cm]
 e_i \cdot e_j =0, & \text{ for }i\neq j,
\end{array}\right.
$$
\smallskip
and

\smallskip
$$
\mathcal{A}_{RW}(A_n):\left\{
\begin{array}{cl}
 e_1^2= e_2,&\\[.2cm]
 e_i^2=\frac{1}{2}e_{i-1}+\frac{1}{2}e_{i+1}, & \text{ for } i\in \{2,\ldots,n-1\},\\[.2cm]
 e_n^2=e_{n-1}, &\\[.2cm]
 e_i \cdot e_j =0,&  \text{ for }i\neq j. 
\end{array}\right.
$$

\smallskip

Now assume that there exists an evolution homomorphism $g:\mathcal{A}(A_n)\longrightarrow \mathcal{A}_{RW}(A_n)$, such that 
\begin{equation}
\nonumber g(e_i)=\sum_{k=1}^{n} t_{ik}e_k, \,\,\, \text{for any } i\in \{1,2,\ldots,n\},
\end{equation}
where the $t_{ik}$'s are scalars.  Thus 
\begin{eqnarray} \label{coe1}
\nonumber g(e_i)\cdot g(e_j) &=&  \sum_{k=1}^{n} t_{ik}t_{jk}e_{k}^{2}  \\
\nonumber                             &=& \left( \frac {t_{i2}t_{j2} }{2} \right) e_1 +  \left(  t_{i1}t_{j1} + \frac {t_{i3}t_{j3} }{2} \right) e_2 +  \sum_{k=3}^{n-2}\left(\frac{ t_{i(k-1)}t_{j(k-1)}  + t_{i(k+1)}t_{j(k+1)} }{2}  \right) e_k  +\\
 & &  \left(\frac{ t_{i(n-2)}t_{j(n-2)} }{2}+ {t_{in}t_{jn} } \right)e_{n-1} +  \left( \frac {t_{i(n-1)}t_{j(n-1)} }{2} \right)e_n.
\end{eqnarray}

Since $g(e_i)\cdot g(e_j)=0$ for any $i\neq j$, we obtain by \eqref{coe1} the following set of equalities:

\begin{eqnarray}
                      t_{i2}t_{j2}  &=& 0,\label{eq:nova3}\\ 
 t_{i1}t_{j1} + \frac {t_{i3}t_{j3} }{2} &=& 0,\label{eq:nsqp6}\\
  t_{i(k-1)}t_{j(k-1)}  + t_{i(k+1)}t_{j(k+1)} & = &0, \text{ for }  k \in \{3, \ldots, n-2\} \label{eq:nsqp3}\\
 \frac{ t_{i(n-2)}t_{j(n-2)} }{2}+ {t_{in}t_{jn} }  &=& 0,\label{eq:nsqp4}\\
 t_{i(n-1)}t_{j(n-1)} &=& 0.\label{eq:nova7}
\end{eqnarray}

First we notice that whether $n$ is even the equations \eqref{eq:nova3}, \eqref{eq:nsqp3} (for $k$ odd) and \eqref{eq:nsqp4} allow us to conclude that $t_{i\ell}t_{j\ell}=0$ whether $i\neq j$ and $\ell$ is even, while the equations \eqref{eq:nsqp6}, \eqref{eq:nsqp3} (for $k$ even) and \eqref{eq:nova7} imply $t_{i\ell}t_{j\ell}=0$ whether $i\neq j$ and $\ell$ is odd. Therefore, we obtain
\begin{equation}\label{eq:tij}
t_{i\ell}t_{j\ell}=0, \text{ for any }i,j,\ell\in \{1,2,\ldots,n\} \text{ with }i\neq j.
\end{equation}

In order to show \eqref{eq:tij} for the case in which $n$ is odd we need a bit more of work. At first we obtain, again by \eqref{eq:nova3}, \eqref{eq:nsqp3} (for $k$ odd) and \eqref{eq:nsqp4} that
\begin{eqnarray}
                   t_{i\ell}t_{j\ell} &= &0, \text{ if } \ell  \text{ is even} \label{eq:ynqp}.
 \end{eqnarray}

 Our purpose from now is to show that \eqref{eq:ynqp} holds also when $\ell$ is odd. We shall accomplish this by showing that $t_{i1}t_{j1}=0$ for $i\neq j$ and applying \eqref{eq:nsqp6},  \eqref{eq:nsqp3} (for $k$ even) and \eqref{eq:nsqp4}. First observe that
\begin{eqnarray}
\label{coe2} g(e_{1}^{2} )  &=&  g(e_{2}) = \sum_{k=1}^{n} t_{2k}e_k, \\
\label{coe4}  g(e_{i}^2)      &=& g(e_{i-1})+g(e_{i+1}) = \sum_{k=1}^{n} (t_{(i-1)k} + t_{(i+1)k}  )e_{k}, \text{ for } i \in \{ 2, \ldots, n-1\},\\
\label{coe3} g(e_{n}^{2} )  &=&  g(e_{n-1}) =  \sum_{k=1}^{n} t_{(n-1)k}e_{k}.
\end{eqnarray}

It will be useful the set of identities obtained from \eqref{coe1} and \eqref{coe2}-\eqref{coe3} by observing the second coefficient of $g(e_j^2)$ for any $j$, i.e.:

\begin{equation}\label{eq:novanova}
t_{j1}^{2} + \frac{t_{j3}^2}{2}=\left\{
\begin{array}{ll}
 t_{22},&  \text{ for } j =1,  \\[.2cm]
 t_{(j-1)2} +   t_{(j+1)2}, & \text{ for } j \in \{2,\ldots,n-1\},\\[.2cm]
 t_{(n-1)2}, & \text{ for } j=n.  \\
\end{array}\right.
\end{equation}

We notice that having $t_{i2}=0$ for any $i$ implies by \eqref{eq:novanova} that $t_{i1}=0$ for any $i$. Lets assume $t_{i_{0}2} \not= 0$ for some $i_{0}\in \{1, \ldots, n\}$. Then, we have that 
\begin{equation}\label{zeros}
t_{k2}=0, \text{ for }  k \not= i_{0}.
\end{equation}

Moreover, by \eqref{eq:novanova} the second coefficient of $g(e_{i_{0}}^2)$ satisfies

\begin{equation}\label{eq:nsqp5}
t_{i_{0}1}^{2} + \frac{t_{i_{0}3}^2}{2}=\left\{
\begin{array}{ll}
 t_{22},&  \text{ for } i_{0}=1,  \\[.2cm]
 t_{(i_{0}-1)2} +   t_{(i_{0}+1)2}, & \text{ for } i_{0}\in \{2,\ldots,n-1\},\\[.2cm]
 t_{(n-1)2}, & \text{ for } i_{0}=n.  \\
\end{array}\right.
\end{equation}

Observe that the equation above implies $t_{i_{0}1}=0$. Now, consider $j$ in such a way that $j$ is not a neighbor of $i_0$ (of course $j \not=i_{0}$). By  (\ref{zeros}) and \eqref{eq:nsqp5} we have that

\begin{equation} \label{noveci}
t_{j1}=0,\text{  if } j  \text{ is not a neighbor of } i_{0}.
\end{equation}
\noindent
From now on we consider four cases, according to the possible values of $i_0$:\\

\smallskip
\noindent
{\bf \underline{Case 1}: $i_{0}=1$.} Since $t_{11}=0$ and by \eqref{noveci} we have $t_{k1}=0$ for $k \not =2.$ Therefore $t_{i1}t_{j1}=0 \text{ for } i \not= j.$

\smallskip
\noindent
{\bf \underline{Case 2}: $i_{0}=n$.} As before $t_{k1}=0$ for  $k \not = n-1.$ Therefore
$t_{i1}t_{j1}=0 \text{ for } i \not= j.$

\smallskip
\noindent
{\bf \underline{Case 3}: $i_{0} \notin \{1, n-2,n-1,n\}$.} By (\ref{noveci}) we have that  $t_{k1}=0$,  for  $k \notin\{i_{0} -1, i_{0} +1\}$. Let  $j:=i_{0} +2$. By (\ref{coe1})  and (\ref{coe4}),    the first coefficient  of $g(e_{j}^2)$ satisfies
$$
\frac {t^{2}_{(i_{0}+2)2}}{2}=t_{(i_{0}+1)1} + t_{(i_{0}+3)1}.
$$
But $t_{(i_{0}+2)2}=0$ by  (\ref{zeros}),  and $t_{(i_{0}+3)1}=0$ by  (\ref{noveci}). Hence $t_{(i_{0}+1)1}=0$, which implies that $t_{k1}=0$ for $k\neq i_0 -1$, and therefore
$t_{i1}t_{j1}=0, \text{ for } i\not= j.$

\smallskip
\noindent
{\bf \underline{Case 4}: $i_{0}\in \{n-1, n-2\}$.}  Let $j:=i_{0}-2$. By (\ref{coe1})  and (\ref{coe4}),   the first coefficient  of $g(e_{j}^2)$ satisfies
$$
\frac {t^{2}_{(i_{0}-2)2}}{2}= t_{(i_{0}-3)1}+t_{(i_{0}-1)1} .
$$
But $t_{(i_{0}-2)2}=0$ by  (\ref{zeros}), and $t_{(i_{0}-3)1}=0$ by (\ref{noveci}). Then $t_{(i_{0}-1)1}=0$, and therefore  $t_{k1}=0$ for $k\neq i_0 +1$. Thus $t_{i1}t_{j1}=0, \text{ for } i\not= j.$ Note that whether $n=5$ and $ i_{0}=n-2$, so $i_0=3$, then $j=1$. In this case the first coefficient of $g(e_j^2)$ satisfies $t_{12 }^{2}= t_{21}.$ But $t_{12}=0$ by  (\ref{zeros}) and then $t_{21}=0$. So the same conclusion is valid for this case.

\smallskip
Our previous analysis for the case in which $n$ is odd is enough to verify that \eqref{eq:tij} indeed holds for any value of $n>3$. Indeed, we known \eqref{eq:ynqp} (for $\ell$ even), we proved $t_{i1}t_{j1}=0$ for $i\neq j$ and we applied \eqref{eq:nsqp6},  \eqref{eq:nsqp3} (for $k$ even) and \eqref{eq:nsqp4} to get $t_{i\ell}t_{j\ell}=0$ for $\ell$ odd. Therefore \eqref{eq:ynqp} holds for any value of $\ell$. This in turns implies that  for any $k$, $t_{ik}\neq 0$ for at most one of the values of $i$. In other words, if the map $g$ exists, then it must be defined as
$$g(e_i) = \alpha_i e_{\pi(i)},$$
for $i\in \{1,2,\ldots,n\}$, where the $\alpha_i's$ are scalars and $\pi$ is an element of the symmetric group $S_n$. Again, since $g$ is an evolution homomorphism we have $g(e_i^2)=g(e_i)\cdot g(e_i)$, for any $i$. In particular,  $g(e_1^2)=g(e_1)\cdot g(e_1)$; it follows that 
$$g(e_1^2) = g(e_2)=\alpha_2 e_{\pi(2)},$$
is equal to
$$g(e_1)\cdot g(e_1) = \alpha_1^2 e_{\pi(1)}^2.$$

\noindent Hence  $\alpha_2 = \alpha_1^2$, and $\pi(1)\in \{1,n\}$. Analogously,  $g(e_n^2)=g(e_n)\cdot g(e_n)$ and then
$$g(e_n^2) = g(e_{n-1})=\alpha_{n-1} e_{\pi(n-1)},$$
is equal to
$$g(e_n)\cdot g(e_n) = \alpha_{n}^2 e_{\pi(n)}^2,$$
and we obtain $\alpha_{n-1}=\alpha_{n}^2$, and $\pi(n)\in\{1,n\}$. For $i\in \{2, \ldots, n-1\}$ ($\pi(i)\notin \{1,n\}$) we have on one hand 
$$g(e_i)\cdot g(e_i)=\alpha_{i}^2 e_{\pi(i)}^2 = \frac{\alpha_{i}^2}{2}( e_{\pi(i)-1}+e_{\pi(i)+1}), $$
and, on the other hand
$$g(e_i^2)=g(e_{i-1}+e_{i+1})= \alpha_{i-1} e_{\pi(i-1)}+ \alpha_{i+1} e_{\pi(i+1)}.$$
Thus, $g(e_i^2)=g(e_i)\cdot g(e_i)$ implies
$$\frac{\alpha_{i}^2}{2}( e_{\pi(i)-1}+e_{\pi(i)+1}) =  \alpha_{i-1} e_{\pi(i-1)}+ \alpha_{i+1} e_{\pi(i+1)}.$$
As a consequence of the previous identities we have the relations: 
$$ \alpha_{1}^2=\alpha_{2}, \, \, \, \,  \textrm{and} \, \, \, \,  \alpha_{i-1} = \frac{\alpha_{i}^2}{2}  = \alpha_{i+1} \, \, \, \,  \textrm{for} \, \, \, \, i\in \{2, 3, \cdots, n-1\}.$$
In particular, for $i\in\{2,3\}$ we have 
$$ \alpha_{1} = \frac{\alpha_{2}^2}{2}  = \alpha_{3} \, \, \, \,  \textrm{and} \, \, \, \, \alpha_{2} = \frac{\alpha_{3}^2}{2}  = \alpha_{4} ,$$
then
$$\alpha_{2}= \frac{\alpha_{3}^2}{2} =\frac{ \left(\alpha_2^2 / 2\right)^2}{2} =\frac{\alpha_{2}^4}{8}.$$
Note that $\alpha_2 =0$ is a solution of the equation above. In this case we obtain $\alpha_i =0$, for any $i$, and therefore $g$ is the null homomorphism. If $\alpha_2 \neq 0$, it should be positive, and then we obtain $ \alpha_{2}  = 2  $. Finally, note that as $\alpha_1^2 = \alpha_2$ it should be $\alpha_1 = \sqrt{2}$, but $ \alpha_{1}  = \alpha_2^2 /2 =2$ and we get a contradiction. Therefore the only evolution homomorphism from $\mathcal{A}(A_n)$ to $\mathcal{A}_{RW}(A_n)$ is the null map and  $\A(A_n) \ncong \A_{RW}(A_n)$ as evolution algebras. Observe that our proof assumes $n>3$.
\end{proof}

\begin{remark}
It is not difficult to see that Proposition \ref{prop:path} is also true for a semi-infinite path with vertices $\{1,2,\ldots\}$, which we denote by $A_{\infty}$. The proof follows the same lines as before. More precisely, one can notice that the respective evolution algebras have set of generators given by $\{e_1,e_2,\ldots\}$ and relations:

\smallskip
$$
\mathcal{A}(A_{\infty}): \left\{
\begin{array}{cl}
 e_1^2= e_2,&\\[.2cm]
  e_i^2=e_{i-1}+e_{i+1}, & \text{ for } i\in \{2,\ldots\},\\[.2cm]
 e_i \cdot e_j =0, & \text{ for }i\neq j,
\end{array}\right.
$$
\smallskip
and

\smallskip
$$
\mathcal{A}_{RW}(A_{\infty}):\left\{
\begin{array}{cl}
 e_1^2= e_2,&\\[.2cm]
 e_i^2=\frac{1}{2}e_{i-1}+\frac{1}{2}e_{i+1}, & \text{ for } i\in \{2,\ldots\},\\[.2cm]
 e_i \cdot e_j =0,&  \text{ for }i\neq j. 
\end{array}\right.
$$
Then, it is not difficult to see that by assuming the existence of an evolution homomorphism $g:\mathcal{A}(A_{\infty})\longrightarrow \mathcal{A}_{RW}(A_{\infty})$, such that $g(e_i)=\sum_{k=1}^{\infty} t_{ik}e_k$, for any $i\in \{1,2,\ldots\}$, where the $t_{ik}$'s are scalars, one can obtain \eqref{eq:nova3}, \eqref{eq:nsqp6} and \eqref{eq:nsqp3} (for $k\in \{3,\ldots\}$). Then one can conclude from these equations $t_{i\ell}t_{j\ell}=0$ for $i\neq j$ and $\ell$ even. The same equality is obtained for $\ell$ odd by the same arguments of Case 1 (take $i_0=1$) and Case 3 (take $i_0\neq1$) in the proof of Proposition \ref{prop:path}. At this point, a contradiction is obtained when the equality $g(e_i^2)=g(e_i)\,g(e_i)$ is checked for $i\in \{1,2,3\}$.  This is an example of infinite graph such that  $\A(G) \ncong \A_{RW}(G)$.
\end{remark}

\begin{remark}
In Theorem \ref{teo:iso}(ii) we prove that the respective evolution algebras associated to any complete bipartite graph are isomorphic. A natural question arises when one remove edges in these graphs. In this case one can have the two different behaviors presented in this paper. By one hand, one can obtain an example of bipartite graph (non-complete) for which the respective algebras are isomorphic. Take for example a cycle graph, which is a $2$-regular graph, and apply Theorem \ref{teo:iso}(i). On the other hand it is possible to obtain an example of bipartite graph with non-isomorphic associated algebras. In this case take a path graph and apply Proposition \ref{prop:path}. See Figure \ref{FIG:bipartiteej} for more details.  
\end{remark}

\begin{figure}[h]
\label{FIG:bipartiteej}
\begin{center}

\subfigure[][Cycle graph $C_8$.]{

\begin{tikzpicture}[scale=0.8]

\draw (-2,3) -- (2,3)--(-2,2)--(2,2)--(-2,1)--(2,1)--(-2,0)--(2,0)--(-2,3);


\filldraw [black] (-2,1) circle (2.5pt);
\filldraw [black] (-2,0) circle (2.5pt);
\filldraw [black] (-2,2) circle (2.5pt);
\filldraw [black] (-2,3) circle (2.5pt);
\filldraw [black] (2,0) circle (2.5pt);
\filldraw [black] (2,1) circle (2.5pt);
\filldraw [black] (2,2) circle (2.5pt);
\filldraw [black] (2,3) circle (2.5pt);

\end{tikzpicture}

}\qquad\qquad
\subfigure[][Path Graph $A_7$.]{

\begin{tikzpicture}[scale=0.8]

\draw (-2,3) -- (2,2.5)--(-2,2)--(2,1.5)--(-2,1)--(2,0.5)--(-2,0);


\filldraw [black] (-2,1) circle (2.5pt);
\filldraw [black] (-2,0) circle (2.5pt);
\filldraw [black] (-2,2) circle (2.5pt);
\filldraw [black] (-2,3) circle (2.5pt);
\filldraw [black] (2,0.5) circle (2.5pt);
\filldraw [black] (2,1.5) circle (2.5pt);
\filldraw [black] (2,2.5) circle (2.5pt);

\end{tikzpicture}

}

\caption{Examples of bipartite graphs with different behaviors.}
\end{center}
\end{figure}
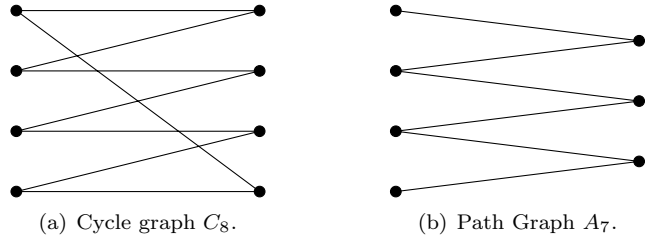

In the sequel we consider the friendship graph, usually denoted by $F_n$, which is a finite graph with $2n+1$ vertices, $3n$ edges, constructed by joining $n$ copies of the triangle graph with a common vertex (see Figure 3.3).

\begin{figure}[h]
\label{FIG:friendship}
\begin{center}

\subfigure[][$F_{2}$]{

\begin{tikzpicture}[scale=0.8]

\draw (0,0) -- (2,1)--(2,-1)--(0,0);
\draw (0,0) -- (-2,1)--(-2,-1)--(0,0);


\filldraw [black] (0,0) circle (2.5pt);
\filldraw [black] (2,1) circle (2.5pt);
\filldraw [black] (2,-1) circle (2.5pt);
\filldraw [black] (-2,-1) circle (2.5pt);
\filldraw [black] (-2,1) circle (2.5pt);
\filldraw [white] (-1,2) circle (2.5pt);
\filldraw [white] (-1,-2) circle (2.5pt);
\filldraw [white] (1,2) circle (2.5pt);
\filldraw [white] (1,-2) circle (2.5pt);

\end{tikzpicture}

}\qquad\qquad
\subfigure[][$F_{4}$]{

\begin{tikzpicture}[scale=0.8]

\draw (0,0) -- (2,1)--(2,-1)--(0,0);
\draw (0,0) -- (-2,1)--(-2,-1)--(0,0);
\draw (0,0) -- (-1,2)--(1,2)--(0,0);
\draw (0,0) -- (-1,-2)--(1,-2)--(0,0);


\filldraw [black] (0,0) circle (2.5pt);
\filldraw [black] (2,1) circle (2.5pt);
\filldraw [black] (2,-1) circle (2.5pt);
\filldraw [black] (-2,-1) circle (2.5pt);
\filldraw [black] (-2,1) circle (2.5pt);
\filldraw [black] (-1,2) circle (2.5pt);
\filldraw [black] (-1,-2) circle (2.5pt);
\filldraw [black] (1,2) circle (2.5pt);
\filldraw [black] (1,-2) circle (2.5pt);

\end{tikzpicture}

}\qquad\qquad
\subfigure[][$F_{8}$]{

\begin{tikzpicture}[scale=0.8]

\draw (0,0) -- (2,0.8)--(2,-0.8)--(0,0);
\draw (0,0) -- (-2,0.8)--(-2,-0.8)--(0,0);
\draw (0,0) -- (-0.8,2)--(0.8,2)--(0,0);
\draw (0,0) -- (-0.8,-2)--(0.8,-2)--(0,0);
\draw (0,0) -- (-1.8,1.1)--(-1.1,1.8)--(0,0);
\draw (0,0) -- (1.8,1.1)--(1.1,1.8)--(0,0);
\draw (0,0) -- (-1.8,-1.1)--(-1.1,-1.8)--(0,0);
\draw (0,0) -- (1.8,-1.1)--(1.1,-1.8)--(0,0);

\filldraw [black] (0,0) circle (2.5pt);
\filldraw [black] (2,0.8) circle (2.5pt);
\filldraw [black] (2,-0.8) circle (2.5pt);
\filldraw [black] (-2,-0.8) circle (2.5pt);
\filldraw [black] (-2,0.8) circle (2.5pt);
\filldraw [black] (-0.8,2) circle (2.5pt);
\filldraw [black] (-0.8,-2) circle (2.5pt);
\filldraw [black] (0.8,2) circle (2.5pt);
\filldraw [black] (0.8,-2) circle (2.5pt);

\filldraw [black] (1.1,-1.8) circle (2.5pt);
\filldraw [black] (1.1,1.8) circle (2.5pt);
\filldraw [black] (-1.1,-1.8) circle (2.5pt);
\filldraw [black] (-1.1,1.8) circle (2.5pt);

\filldraw [black] (-1.8,1.1) circle (2.5pt);
\filldraw [black] (1.8,1.1) circle (2.5pt);
\filldraw [black] (-1.8,-1.1) circle (2.5pt);
\filldraw [black] (1.8,-1.1) circle (2.5pt);
\end{tikzpicture}
}

\caption{Friendship graph.}
\end{center}
\end{figure}
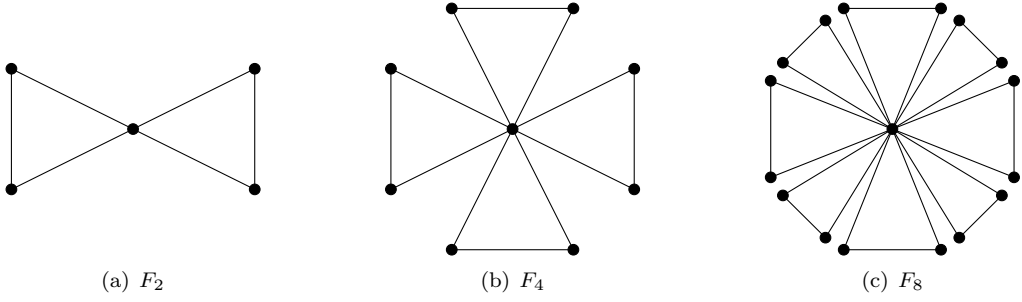

We will slightly abuse the notation, and write $\tilde{F}_n$ to the friendship graph with $n$ vertices, i.e. $\tilde{F}_n := F_{(n-1)/2}$, for $n\in \{2k+1,k\geq 1\}$. By Theorem \ref{teo:iso}(i) we have $\A(\tilde{F}_3)\cong \A_{RW}(\tilde{F}_3)$. Here we consider $n>4$.

 \begin{prop}
Let $\tilde{F}_{n}$ be a friendship graph, with $n>4$. Then the only evolution homomorphism between $\A(\tilde{F}_{n})$ and $\A_{RW}(\tilde{F}_{n})$ is the null map. Thus $\A(\tilde{F}_{ n}) \ncong \A_{RW}(\tilde{F}_{n})$ as evolution algebras.
\end{prop}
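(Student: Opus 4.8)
The plan is to mimic the strategy used for the path graph $A_n$ in Proposition~\ref{prop:path}, exploiting the fact that the friendship graph $\tilde F_n$ has exactly one vertex of high degree (the ``hub'', of degree $n-1$) and all remaining $n-1$ vertices of degree $2$. First I would fix notation: label the hub $0$ and the outer vertices $1,\dots,n-1$, grouped into $(n-1)/2$ pairs $\{2\ell-1,2\ell\}$, each pair together with $0$ forming a triangle. Then $\A(\tilde F_n)$ has relations $e_0^2=\sum_{j=1}^{n-1}e_j$, and for each outer pair $e_{2\ell-1}^2=e_0+e_{2\ell}$, $e_{2\ell}^2=e_0+e_{2\ell-1}$; while $\A_{RW}(\tilde F_n)$ has $e_0^2=\frac{1}{n-1}\sum_{j=1}^{n-1}e_j$ and $e_{2\ell-1}^2=\frac12 e_0+\frac12 e_{2\ell}$, $e_{2\ell}^2=\frac12 e_0+\frac12 e_{2\ell-1}$.

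Next I would assume an evolution homomorphism $g:\A(\tilde F_n)\to\A_{RW}(\tilde F_n)$ exists, write $g(e_i)=\sum_{k=0}^{n-1}t_{ik}e_k$, and expand $g(e_i)\cdot g(e_j)=\sum_k t_{ik}t_{jk}e_k^2$ in the natural basis of $\A_{RW}(\tilde F_n)$. Collecting the coefficient of $e_0$ and of each $e_m$ with $m\geq 1$, and using $g(e_i)\cdot g(e_j)=0$ for $i\neq j$, I get a system: the $e_0$-coefficient gives $\frac{1}{n-1}\sum_{k\geq1}t_{ik}t_{jk}=0$ (i.e.\ the ``outer'' parts of $g(e_i)$ and $g(e_j)$ are orthogonal with respect to the standard bilinear form), and for each outer vertex $m$ lying in pair $\{m,m'\}$ the $e_m$-coefficient gives $t_{i0}t_{j0}\cdot\frac{1}{n-1}+\frac12 t_{im'}t_{jm'}=0$. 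Combining these, together with the analogous equation at $m'$, I expect to force $t_{i0}t_{j0}=0$ and $t_{ik}t_{jk}=0$ for all $k$ whenever $i\neq j$ — exactly the analogue of \eqref{eq:tij}. The hypothesis $n>4$ (so that there are at least two triangles) should be what makes this elimination go through, just as $n>3$ was needed for the path; with a single triangle ($n=3$) the argument collapses, consistent with $\A(\tilde F_3)\cong\A_{RW}(\tilde F_3)$.

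Once orthogonality forces each column of $(t_{ik})$ to have at most one nonzero entry, the map must have the form $g(e_i)=\alpha_i e_{\pi(i)}$ for some $\alpha_i\in\bbR$ and some injective $\pi$ on the index set; since $g$ maps a basis to (part of) a basis, $\pi$ is a permutation. Then I would impose $g(e_i^2)=g(e_i)\cdot g(e_i)$ on the generators. The hub relation gives $\alpha_0^2 e_{\pi(0)}^2=\sum_{j\geq1}\alpha_j e_{\pi(j)}$: the left side is a single $e_{\pi(0)}^2$, which in $\A_{RW}$ is either $\frac{1}{n-1}\sum_{k\geq1}e_k$ (if $\pi(0)=0$) or $\frac12 e_0+\frac12 e_{(\pi(0))'}$ (if $\pi(0)$ is outer). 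Matching the number of nonzero coefficients, and recalling $n-1\geq 4>2$, forces $\pi(0)=0$; hence $\alpha_j=\frac{\alpha_0^2}{n-1}$ for every outer $j$, so all outer scalars are equal, call it $\beta$. The triangle relations $e_{2\ell-1}^2=e_0+e_{2\ell}$ then read $\beta^2 e_{\pi(2\ell-1)}^2=\alpha_0 e_0+\beta e_{\pi(2\ell)}$; since $\pi$ permutes the outer vertices and respects the pairing structure, $e_{\pi(2\ell-1)}^2=\frac12 e_0+\frac12 e_{\pi(2\ell)}$, yielding $\frac{\beta^2}{2}=\alpha_0$ and $\frac{\beta^2}{2}=\beta$. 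From $\frac{\beta^2}{2}=\beta$ either $\beta=0$, giving $\alpha_0=0$ and hence $g\equiv0$, or $\beta=2$, whence $\alpha_0=2$; but then $\beta=\frac{\alpha_0^2}{n-1}=\frac{4}{n-1}=2$ forces $n=3$, contradicting $n>4$. Therefore $g$ is the null map and $\A(\tilde F_n)\ncong\A_{RW}(\tilde F_n)$.

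The main obstacle I anticipate is the first stage: rigorously deducing $t_{ik}t_{jk}=0$ for all $k$ from the orthogonality relations at the hub and at the outer vertices. Unlike the path, where the ``chain'' structure of equations~\eqref{eq:nova3}--\eqref{eq:nova7} propagated the vanishing along the path, here the coupling is through the single hub coordinate, so I would need to argue carefully — likely by first showing $t_{im'}t_{jm'}=0$ within each pair using the two outer equations, then feeding this back into the hub equation to kill $t_{i0}t_{j0}$, and possibly splitting into cases according to whether the distinguished index $i_0$ with $t_{i_00}\neq0$ (or $t_{i_0m}\neq0$) is the hub or an outer vertex, exactly paralleling the case analysis of Proposition~\ref{prop:path}. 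The rest of the argument is then routine bookkeeping with the permutation $\pi$ and the scalar equations.
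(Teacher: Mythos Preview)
Your proposal is correct and follows essentially the same route as the paper. Two clarifications are in order.

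First, the orthogonality step you flag as the ``main obstacle'' is in fact much easier for $\tilde F_n$ than for the path: summing the $n-1$ equations $\tfrac{1}{n-1}t_{i0}t_{j0}+\tfrac12 t_{im'}t_{jm'}=0$ over all outer $m$ yields $t_{i0}t_{j0}+\tfrac12\sum_{k\geq1}t_{ik}t_{jk}=0$, and the hub--coefficient equation (which should read $\tfrac12\sum_{k\geq1}t_{ik}t_{jk}=0$, not $\tfrac{1}{n-1}\sum\cdots$) kills the second term, giving $t_{i0}t_{j0}=0$; then each individual outer equation gives $t_{ik}t_{jk}=0$. No case analysis is needed, and this works for every odd $n\geq3$; the hypothesis $n>4$ enters only at the final numerical contradiction $\tfrac{4}{n-1}=2$, not here.

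Second, your deduction that $\pi(0)=0$ by ``matching the number of nonzero coefficients'' is incomplete: if $\pi(0)$ is outer you only conclude that all but two of the outer $\alpha_j$ vanish, which is not in itself a contradiction. The paper instead treats $\pi(\text{hub})\neq\text{hub}$ as a separate short case, chasing the relations to force $\alpha_{\text{hub}}=0$ and hence $g=0$; you should do the same (e.g.\ pick an outer $j$ with $\alpha_j=0$, apply $g$ to $e_j^2=e_0+e_{j'}$, and deduce $\alpha_0=0$). Once that case is disposed of, your endgame with $\beta=\alpha_0^2/(n-1)$ and $\beta^2/2=\alpha_0=\beta$ is exactly the paper's.
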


\begin{proof}
Suppose that $\A(\tilde{F}_{ n})$ and  $ \A_{RW}(\tilde{F}_{n})$ are the evolution algebras induced by $\tilde{F}_{n}$ and by the random walk on $\tilde{F}_{n}$, respectively. That is, take the set of generators $\{e_1,e_2,\ldots,e_n\}$, and the relations:

\smallskip
$$
\mathcal{A}(\tilde{F}_{n}):\left\{
\begin{array}{cl}
 e_i^2=e_{i+1}+e_{n}, &  \textrm{for} \, \,  i\in\{1,\ldots,n-1\} \, \,  \textrm{such that }i \text{ is odd,}\\[.5cm]
 e_i^2=e_{i-1}+e_{n}, &  \textrm{for} \, \,  i\in\{1,\ldots,n-1\} \, \,  \textrm{such that }i \text{ is even,}\\[.5cm]
 e_n^2= \displaystyle\sum_{i=1} ^{n-1} e_{i}, &  \\[.5cm]
 e_i \cdot e_j =0, & \text{ for } i\neq j, 
\end{array}\right.
$$
and
\smallskip
$$
\mathcal{A}_{RW}(\tilde{F}_{n}):\left\{
\begin{array}{cl}
 e_i^2=\displaystyle\frac{1}{2}(e_{i+1}+e_{n}), &  \textrm{for} \, \,  i\in\{1,\ldots,n-1\} \, \,  \textrm{such that }i \text{ is odd,}\\[.5cm]
 e_i^2=\displaystyle\frac{1}{2}(e_{i-1}+e_{n}), &  \textrm{for} \, \,  i\in\{1,\ldots,n-1\} \, \,  \textrm{such that }i \text{ is even,}\\[.5cm]
 e_n^2= \displaystyle \frac{1}{n-1}\displaystyle \sum_{i=1} ^{n-1} e_{i}, &  \\[.5cm]
 e_i \cdot e_j =0, & \text{ for } i\neq j. 
\end{array}\right.
$$
\smallskip

 Let $g:\mathcal{A}(\tilde{F}_{n})\longrightarrow \mathcal{A}_{RW}(\tilde{F}_{n})$ be an evolution homomorphism such that
\begin{equation} \nonumber
g(e_i)=\sum_{k=1}^{n} t_{ik}e_k, \text{ for } i\in\{1,2,\ldots,n\},
\end{equation}
\noindent
where the $t_{ik}$'s are scalars. Thus $g(e_i)\cdot g(e_j)=0$ for $i\neq j$. But
\begin{eqnarray}
\nonumber g(e_i)\cdot g(e_j)&=& \sum_{k=1}^{n}t_{ik}t_{jk} e^2_{k}\\
\nonumber        &=& \left(\frac{t_{i2}t_{j2}}{2}+\frac{t_{in}t_{jn}}{n-1} \right)e_1 +\left(  \frac{t_{i1}t_{j1}}{2}+\frac{t_{in}t_{jn}}{n-1} \right)e_2 + \cdots  + 
	\left(\frac{1}{n-1} \sum_{k=1}^{n-1} t_{ik}t_{jk}\right)e_{n}.
\end{eqnarray}
\noindent
Then the coefficients of $e_{\ell}$ are zero for $\ell\in\{1, \ldots, n\}$.  According to the above remark, we have 

\begin{eqnarray}
\label{Mwhe:1}\frac{t_{i(\ell+1)}t_{j(\ell+1)}}{2}+ \frac{t_{in}t_{jn}}{n-1} &=& 0,\,\,\,\,\, \,\,\,\,\,\,\textrm{for} \, \,  \ell\in\{1,\ldots,n-1\} \, \,  \textrm{such that }\ell \text{ is odd,}  \\[.2cm]
\label{Mwhe:2}\frac{t_{i(\ell-1)}t_{j(\ell-1)}}{2}+ \frac{t_{in}t_{jn}}{n-1} &=& 0,\,\,\,\,\, \,\,\,\,\,\,\textrm{for} \, \,  \ell\in\{1,\ldots,n-1\} \, \,  \textrm{such that }\ell \text{ is even,}\,\,\,\,\,\,\,\,\\[.2cm]
\label{Mwhe:4} \frac{1}{n-1}\sum_{k=1}^{n-1} t_{i k}t_{j k}&=&0, \,\,\,\,\, \,\,\,\,\,\,\text{ for }  \ell=n. \,\,\,\,\,\,\,\,
\end{eqnarray}

\noindent
Adding the equations (\ref{Mwhe:1}) and (\ref{Mwhe:2}), for $\ell \in \{1,\ldots,n-1\}$, we get 

$$ \left(\ \frac{1}{2} \sum_{\ell=1}^{n-1} t_{i\ell}t_{j\ell}\right)+(n-1) \left(\frac{t_{in}t_{jn}}{n-1}\right) =0,$$ 
which, using  (\ref{Mwhe:4}), implies
\begin{equation*} t_{in}t_{jn}=0, \text { for } i,j\in \{1,\ldots, n\}\,\,\,\,  \text{and} \,\,\,\,i\not= j.
\end{equation*}
This in turn implies, by (\ref{Mwhe:1}) and   (\ref{Mwhe:2}), that 
\begin{equation*} t_{ik}t_{jk}=0, \text { for } i,j,k\in \{1,\ldots, n\}\,\,\,\,  \text{and} \,\,\,\,i\not= j.
\end{equation*}

Then we obtain that $t_{ik}\neq 0$ for at most one of the values of $i$, and for any $k$. These  means that if $g$ exists,  it should be defined as
$$g(e_i) = \alpha_i e_{\pi(i)},$$
for $i\in\{1,2,\ldots,n\}$, where the $\alpha_i's$ are scalars and $\pi \in S_n$.

Again, by our assumption we have  $$g(e_i^2)=g(e_i)\cdot g(e_i)= \alpha_i^2 e_{\pi(i)}^2, \,  \, \, \textrm{for } \, \, i  \in \{1, \ldots, n\},$$ and
$$g(e_i^2) = g(e_{i+t}+e_{n})=\alpha_{i+t} e_{\pi(i+t)}+ \alpha_{n} e_{\pi(n)}, \, \, \textrm{for} \, \,  i \neq n,$$ 
where 
\begin{equation}\label{eq:t}
t:=t(i)=\left\{
\begin{array}{cl}
+1,&\text{ if }i\text{ is odd},\\[.2cm]
-1,&\text{ if }i\text{ is even.}
\end{array}\right.
\end{equation}
Thus
$$\alpha_i^2 e_{\pi(i)}^2 = \alpha_{i+t} e_{\pi(i+t)}+ \alpha_{n} e_{\pi(n)}, \, \, \textrm{for} \, \, i \neq n. $$

If $\pi(n) \neq n$, then $\pi(i+t)= n$ and 
$$\alpha_i^2 e_{\pi(i)}^2 =  \alpha_{i+t} e_{n}+ \alpha_{n} e_{\pi(n)}, $$
with $\pi(i), \pi(n) \in \{1,2, \cdots, n-1\}$.
As $e_{\pi(i+t)}=e_{n}$ we obtain 
$$ g(e_{\pi(i+t)}^2) = g(e_{n}^2) = g \left( \frac{1}{n-1} \sum_{i=1}^{n-1} e_{i} \right) =   \frac{1}{n-1} \left(\sum_{i=1}^{n-1} \alpha_{i}  e_{\pi(i)} \right), $$
and also 
$$  g(e_{\pi(i+t)}^2) = g(e_{\pi(i+t)}) \cdot g(e_{\pi(i+t)}) = \alpha_{n}^2 e_{\pi(n)}^2 = \frac{\alpha_{n}^2}{2} \left(  e_{\pi(n)+t} + e_{n} \right).$$
This gives $\alpha_{n}^2=0$, thus $\alpha_{n}=0$ and then $\alpha_{i}=0$ for $i \in \{1, \ldots, n-1\}$.  We conclude that $g$ is a null homomorphism. 

On the other hand, if $\pi(n) = n$ then,  for $i \neq n$, we have
$$g(e_i^2) = g(e_{i+t}+e_{n})=\alpha_{i+t} e_{\pi(i+t)}+ \alpha_{n} e_{n},$$
where $t:=t(i)$ is defined by \eqref{eq:t} and 
$$\alpha_i^2 e_{\pi(i)}^2 = \alpha_{i+t} e_{\pi(i+t)}+ \alpha_{n} e_{n}. $$
But also
$$\alpha_i^2 e_{\pi(i)}^2 = \frac{\alpha_{i}^2}{2}( e_{\pi(i)+t}+ \alpha_{n} e_{n}), \, \, \,  \textrm{with} \, \, \, \pi(i), \pi(i)+t \in \{1,2, \cdots, n-1\}. $$
Then 
$$\frac{\alpha_i^2}{2} \left( e_{\pi(i)+t} + e_{n} \right) = \alpha_{i+t} e_{\pi(i)+t}+ \alpha_{n} e_{n},$$
$\alpha_{n}=\alpha_{i+t}=\alpha_i^2/2$ for $i\in\{1, \ldots, n-1\}$. Therefore $\alpha_{i}=\alpha_{j}$ for $i \neq j$ and $i,j \in \{1, \cdots, n\}$. It follows that $\alpha_i^2/2=\alpha_{i}$. Note that if $\alpha_i =0$ then $g=0$. Lets assume $\alpha_i\neq 0$. Hence $\alpha_{i}=2$ for $i \in \{1, \ldots, n\}$,
$$g(e_n^2) = \alpha_{n}^2 e_{n}^2 = 4 e_{n}^2 = \frac{4}{n-1} \left(\sum_{i=1}^{n-1}  e_{i} \right),$$
and 
$$g(e_n^2) = g \left(\sum_{i=1}^{n-1} e_{i} \right) = 2 \sum_{i=1}^{n-1} e_{\pi(i)}. $$
Therefore $4/(n-1)=2$ and this is possible only if $n=3$. By hypothesis $n>4$, and
this implies that the only evolution homomorphism from $\mathcal{A}(\tilde{F}_n)$ to $\mathcal{A}_{RW}(\tilde{F}_n)$ is the null map. Therefore $\A(\tilde{F}_n) \ncong \A_{RW}(\tilde{F}_n)$ as evolution algebras. 

\end{proof}


Now we consider the wheel graph $W_n$, which is a graph with $n$ vertices, $n\geq 4$, formed by connecting a single vertex, called center, to all the vertices of an $(n-1)$-cycle (see Figure 3.4). Since $W_4$ is a $3$-regular graph we know that $\A(W_4) \cong\A_{RW}(W_4)$ (see Theorem \ref{teo:iso}(i)).

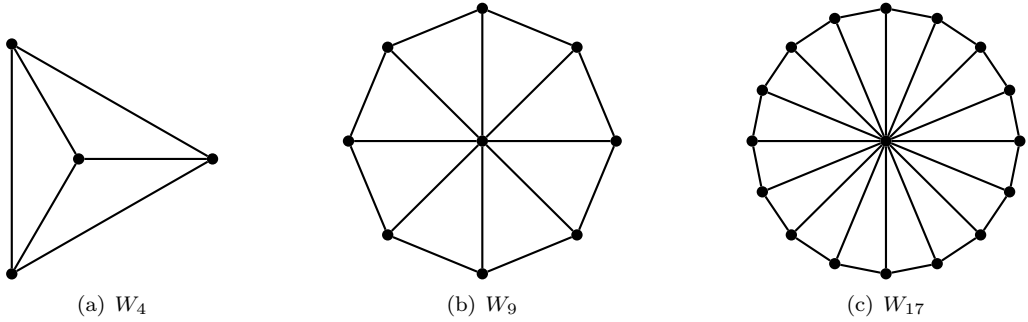
\begin{figure}[h]
\begin{center}

\subfigure[][$W_{4}$]{

\begin{tikzpicture}[scale=.8]
\GraphInit[vstyle=Simple]
    \SetGraphUnit{1}
    \tikzset{VertexStyle/.style = {shape = circle,fill = black,minimum size = 2.5pt,inner sep=1.5pt}}
  \begin{scope}[xshift=12cm]
\grWheel[RA=2.2]{4}%
\end{scope}
\end{tikzpicture}

}\qquad\qquad
\subfigure[][$W_{9}$]{

\begin{tikzpicture}[scale=.8]
\GraphInit[vstyle=Simple]
    \SetGraphUnit{1}
    \tikzset{VertexStyle/.style = {shape = circle,fill = black,minimum size = 2.5pt,inner sep=1.5pt}}
  \begin{scope}[xshift=12cm]
\grWheel[RA=2.2]{9}%
\end{scope}
\end{tikzpicture}

}\qquad\qquad
\subfigure[][$W_{17}$]{

\begin{tikzpicture}[scale=.8]
\GraphInit[vstyle=Simple]
    \SetGraphUnit{1}
    \tikzset{VertexStyle/.style = {shape = circle,fill = black,minimum size = 2.5pt,inner sep=1.5pt}}
  \begin{scope}[xshift=12cm]
\grWheel[RA=2.2]{17}%
\end{scope}
\end{tikzpicture}

}
\caption{Wheel graphs.}
\end{center}
\label{FIG:wheel}
\end{figure}

 \begin{prop}
Let $W_{n}$ be a wheel graph, with $n>4$. Then the only evolution homomorphism between $\A(W_n)$ and $\A_{RW}(W_{n})$ is the null map. In particular, $\A(W_{n}) \ncong \A_{RW}(W_{n})$ as evolution algebras.
\end{prop}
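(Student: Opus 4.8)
The plan is to adapt the strategy of the two previous propositions, but since here the off-diagonal relations will not by themselves pin down $g$, the diagonal relations $g(e_i^2)=g(e_i)g(e_i)$ must be exploited as well. Label the vertices of $W_n$ so that $e_n$ is the centre and $e_1,\dots,e_{n-1}$ run around the $(n-1)$-cycle, rim indices being read modulo $n-1$; then $\mathcal{A}(W_n)$ has $e_i^2=e_{i-1}+e_{i+1}+e_n$ ($i\le n-1$), $e_n^2=\sum_{l=1}^{n-1}e_l$, and $\mathcal{A}_{RW}(W_n)$ has $e_i^2=\tfrac13(e_{i-1}+e_{i+1}+e_n)$ ($i\le n-1$), $e_n^2=\tfrac1{n-1}\sum_{l=1}^{n-1}e_l$. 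Suppose $g$ is an evolution homomorphism and write $g(e_i)=\sum_{k=1}^n t_{ik}e_k$. For $i\neq j$, expanding $0=g(e_i)g(e_j)=\sum_{k=1}^n t_{ik}t_{jk}e_k^2$ in $\mathcal{A}_{RW}(W_n)$: the coefficient of $e_n$ gives $\sum_{k=1}^{n-1}t_{ik}t_{jk}=0$, and summing the coefficients of $e_1,\dots,e_{n-1}$ over the rim (where each rim product $t_{ik}t_{jk}$ occurs twice with weight $\tfrac13$ and $\tfrac{t_{in}t_{jn}}{n-1}$ occurs $n-1$ times) and substituting the previous identity gives $t_{in}t_{jn}=0$ for all $i\neq j$.

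Hence at most one index $i_0$ has $t_{i_0 n}\neq 0$. The key tool from now on is that the diagonal relation at a rim index $i$ has $e_n$-component $t_{(i-1)n}+t_{(i+1)n}+t_{nn}=\tfrac13\sum_{k=1}^{n-1}t_{ik}^2$ (the summand $t_{in}^2 e_n^2$ on the right contributes nothing to $e_n$). \textit{Case A: $t_{in}=0$ for every $i$.} The displayed identity forces $\sum_k t_{ik}^2=0$, so $g(e_i)=0$ for every rim $i$; then $g(e_n)^2=g(e_n^2)=\sum_l g(e_l)=0$, and since $t_{nn}=0$ this forces $g(e_n)=0$ too (the square of a nonzero element supported on the rim has nonzero $e_n$-component). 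Thus $g$ is the null map. \textit{Case B: $i_0\le n-1$.} At $i=i_0$ one has $t_{(i_0\pm1)n}=t_{nn}=0$, so the identity gives $t_{i_0 k}=0$ for $k\le n-1$ and $g(e_{i_0})=t_{i_0 n}e_n\neq0$; but for any rim vertex $i\notin\{i_0-1,i_0,i_0+1\}$---which exists because $n>4$---the identity gives $\sum_k t_{ik}^2=0$, hence $g(e_i)=0$, contradicting that $\{g(e_1),\dots,g(e_n)\}$ be completable to a natural basis. So Case B cannot occur.

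\textit{Case C: $i_0=n$, i.e.\ $t_{nn}\neq0$ and $t_{in}=0$ for $i\le n-1$.} Comparing $e_n$-components in $g(e_n^2)=g(e_n)g(e_n)$ forces $t_{nk}=0$ for $k\le n-1$, so $g(e_n)=t_{nn}e_n$; comparing the rim components then gives $\sum_{i=1}^{n-1}g(e_i)=c\sum_{l=1}^{n-1}e_l$ with $c:=\tfrac{t_{nn}^2}{n-1}\neq0$. Put $w_i:=(t_{i1},\dots,t_{i,n-1})\in\mathbb{R}^{n-1}$ for $i\le n-1$, so $\sum_i w_i=c\,\mathbf 1$ with $\mathbf 1$ the all-ones vector. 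The $e_n$-component of the diagonal relation at rim $i$ now reads $\|w_i\|^2=3t_{nn}$ (so $t_{nn}>0$ and $w_i\neq0$); $\sum_{k<n}t_{ik}t_{jk}=0$ says $\langle w_i,w_j\rangle=0$ for $i\neq j$; and summing the rim components of the diagonal relation at $i$ over the rim gives $\langle w_{i-1},\mathbf 1\rangle+\langle w_{i+1},\mathbf 1\rangle=2t_{nn}$. Therefore $w_1,\dots,w_{n-1}$ is an orthogonal basis of $\mathbb{R}^{n-1}$, and matching $\mathbf 1=\tfrac1c\sum_i w_i$ with its orthogonal expansion forces $\langle\mathbf 1,w_i\rangle=\tfrac{3t_{nn}}{c}$ for every $i$; feeding this into the last two relations gives first $c=3$, then (from $\langle\mathbf 1,\sum_i w_i\rangle=c\|\mathbf 1\|^2$) $c^2=3t_{nn}$, so $t_{nn}=3$, and finally $c=\tfrac{t_{nn}^2}{n-1}$ yields $n-1=3$, i.e.\ $n=4$, contradicting $n>4$. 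Hence only Case A survives: $g$ is the null map, and in particular $\mathcal{A}(W_n)\ncong\mathcal{A}_{RW}(W_n)$ as evolution algebras.

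The essential difficulty, and the reason this argument genuinely departs from those for $A_n$ and $\tilde{F}_n$, is that the squares $e_k^2$ in $\mathcal{A}_{RW}(W_n)$ need not be linearly independent (for instance $\det A(W_5)=0$), so $g(e_i)g(e_j)=0$ does \textit{not} force $t_{ik}t_{jk}=0$ for all $k$; one cannot reduce $g$ to the form $g(e_i)=\alpha_i e_{\pi(i)}$ from the off-diagonal relations alone as in the earlier proofs, and the diagonal relations---together with the linear-algebraic orthogonality argument of Case C---have to carry the weight. Checking that the three cases exhaust all possibilities and bookkeeping the cyclic rim indices modulo $n-1$ (where the hypothesis $n>4$ enters) is the remaining routine part.
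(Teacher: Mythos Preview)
Your overall structure is right, and Case~C---recasting the rim images as an orthogonal family $w_1,\dots,w_{n-1}$ in $\mathbb{R}^{n-1}$ to extract $c=3$, $t_{nn}=3$, and hence $n=4$---is correct and considerably cleaner than the paper's route (which first derives $t_{nn}=n-1$ and then, after a longer chain of summations, reaches the identity $3(n-1)^2=(n-1)^3$). Case~A is also fine.

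The gap is in Case~B. You dispose of it by saying that $g(e_i)=0$ for some rim $i$ while $g(e_{i_0})\neq 0$ ``contradicts that $\{g(e_1),\dots,g(e_n)\}$ be completable to a natural basis.'' But the paper explicitly counts the null map---whose image set is $\{0\}$---as an evolution homomorphism, so the mere presence of zero images among the $g(e_i)$ does not violate the definition as it is used here; you have not explained why the (at most four) nonzero images $g(e_{i_0}),g(e_{i_0\pm1}),g(e_n)$ cannot be complemented. Case~B should instead be closed algebraically, and your own tools suffice. The $e_n$-component of the diagonal relation at $i_0\pm1$ and at $n$ gives $\|w_{i_0\pm1}\|^2=\|w_n\|^2=3t_{i_0 n}>0$, so these three rim-supported vectors are nonzero and, by your off-diagonal identity $\langle w_i,w_j\rangle=0$, pairwise orthogonal. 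Now apply the diagonal relation at the rim vertex $i=i_0+2$ (which lies outside $\{i_0-1,i_0,i_0+1\}$ precisely because $n>4$): since $g(e_{i_0+2})=0$ you obtain $w_{i_0+1}+w_{i_0+3}+w_n=0$. For $n\ge 6$ the middle term vanishes and two nonzero orthogonal vectors would sum to zero; for $n=5$ one has $i_0+3\equiv i_0-1$ and three nonzero pairwise-orthogonal vectors would sum to zero. Either way this is impossible, and Case~B is eliminated. With this patch your proof is complete.
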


\begin{proof}
In order to define the evolution algebras induced by $W_n$ and by the random walk on $W_n$, respectively, we consider the set of generators $\{e_1, e_2, \ldots, e_n\}$, and the relations given by 

\smallskip

$$
\mathcal{A}(W_n): \left\{
\begin{array}{cl}
 e_1^2= e_{n-1}+ e_{2} +e_{n} , & \\[.2cm]
 e_i^2= e_{i-1}+ e_{i+1}+e_{n}, & \text{ for } i\in \{2,\ldots, n-2\},\\[.2cm]
 e_{n-1}^2= e_{1}+e_{n-2} +e_{n} , & \\[.2cm]
 e_n^2=\displaystyle \sum_{j=1}^{n-1} e_{j}, & \\[.2cm]
 e_i \cdot e_j =0, & \text{ for } i\neq j, 
\end{array}\right.
$$
and
\smallskip
$$
\mathcal{A}_{RW}(W_n):\left\{
\begin{array}{cl}
 e_1^2= \frac{1}{3}(e_{n-1}+ e_{2}+e_{n}) , & \\[.2cm]
 e_i^2= \frac{1}{3}(e_{i-1}+ e_{i+1}+e_{n}), &  \text{ for } i\in \{2,\ldots, n-2\},\\[.2cm]
 e_{n-1}^2= \frac{1}{3}(e_{1}+e_{n-2} +e_{n}) , & \\[.5cm]
 e_n^2= \frac{1}{n-1}\displaystyle \left ( \sum_{j=1}^{n-1} e_{j} \right), & \\[.2cm]
 e_i \cdot e_j =0, & \text{ for } i\neq j. 
\end{array}\right.
$$
Assume that there exists an evolution homomorphism $g:\mathcal{A}(W_n)\longrightarrow \mathcal{A}_{RW}(W_n)$, such that for $i\in  \{1,2,\ldots,n\}$
\begin{equation} 
\nonumber g(e_i)=\sum_{k=1}^{n} t_{ik}e_k,
\end{equation}
where the $t_{ik}'s$ are scalars. Thus defined, we have 

\begin{eqnarray}
\nonumber g(e_i)\cdot g(e_j)&=& \sum_{k=1}^{n}t_{ik}t_{jk} e^2_{k}\\
\nonumber        &=& \left(\frac{t_{i2}t_{j2}}{3}+\frac{t_{i(n-1)}t_{j(n-1)}}{3}+ \frac{t_{in}t_{jn}}{n-1} \right)e_1\\
\nonumber								&& +\sum_{\ell=2}^{n-2}\left( \frac{t_{i(\ell-1)}t_{j(\ell-1)}}{3}+  \frac{t_{i(\ell+1)}t_{j(\ell+1)}}{3}+ 
									\frac{t_{in}t_{jn}}{n-1}\right)e_{\ell}\\
\label{whe:0}		&& + \left(\frac{t_{i1}t_{j1}}{3}+\frac{t_{i(n-2)}t_{j(n-2)}}{3}+ \frac{t_{in}t_{jn}}{n-1} \right)e_{n-1}+
									\left(\sum_{\ell=1}^{n-1} \frac{t_{i\ell}t_{j\ell}}{3}\right)e_{n},
\end{eqnarray}
which implies, whether $i\neq j$ because $g(e_i)\cdot g(e_j)=0$, that

\begin{eqnarray}
\label{whe:1}\frac{t_{i2}t_{j2}}{3}+\frac{t_{i(n-1)}t_{j(n-1)}}{3}+ \frac{t_{in}t_{jn}}{n-1}&=& 0,  \\[.2cm]
\label{whe:2}\frac{t_{i(\ell-1)}t_{j(\ell-1)}}{3}+ \frac{t_{i(\ell+1)}t_{j(\ell+1)}}{3}+ \frac{t_{in}t_{jn}}{n-1} &=& 0,\,\,\,\,\, \,\,\,\,\,\,\text{ for }  \ell\in \{2,\ldots,n-2\}, \,\,\,\,\,\,\,\,\\[.2cm]
\label{whe:3}\frac{t_{i1}t_{j1}}{3}+\frac{t_{i(n-2)}t_{j(n-2)}}{3}+ \frac{t_{in}t_{jn}}{n-1}&=& 0, \\[.2cm]
\label{whe:4}\sum_{\ell=1}^{n-1} t_{i\ell}t_{j\ell}&=&0. 
\end{eqnarray}

Adding the equalities (\ref{whe:1}), (\ref{whe:2}) for $\ell\in \{2,\ldots,n-2\}$, and (\ref{whe:3}) we obtain

$$ t_{in}t_{jn}+2 \sum_{\ell=1}^{n-1}\frac{ t_{i\ell}t_{j\ell}} {3}=0.$$ 

So  we conclude by (\ref{whe:4}) that
\begin{equation} \label{eq:tinwheel}
t_{in}t_{jn}=0, \text { for } i,j\in \{1,\ldots, n\}\,\,\,\,  \text{and} \,\,\,\,i\not= j.
\end{equation}

On the other hand,
\begin{equation}\label{whe:6}
g(e_i^2)= \left\{\begin{array}{rlll}
g(e_{\ell_1(i)}+e_{\ell_2(i)}+e_{n}) &=&\displaystyle \sum_{k=1}^{n}(t_{\ell_{1(i)}k}+t_{\ell_{2(i)}k}+t_{nk})e_k,  &\text{ for }  i \not= n ,\\[.2cm]
g\left(\displaystyle \sum_{j=1}^{n-1}e_j\right)&=&\displaystyle \sum_{k=1}^{n}(t_{1k} +t_{2k}+\cdots + t_{(n-1)k} )e_{k}, &\text{ for }  i = n, 
\end{array}\right.
\end{equation}

\smallskip
\noindent where $\ell_{1(i)}$ and $\ell_{2(i)}$ are the neighbors of the vertex $i$, i.e.,
$$
\ell_{1(i)}:=\left\{
\begin{array}{cl}
i-1,& \text{for }i\in \{2,\ldots,n-1\},\\[.2cm]
n-1,&\text{for }i=1,\\[.2cm]
\end{array}\right.
$$
and
$$
\ell_{2(i)}:=\left\{
\begin{array}{cl}
i+1,& \text{for }i\in \{1,\ldots,n-2\},\\[.2cm]
1,&\text{for }i=n-1.\\[.2cm]
\end{array}\right.
$$
Therefore using  (\ref{whe:0}) we know that the $n$th-coordinate of $g(e_i^2)$ in the natural basis $\{e_1, \ldots, e_n\}$
is 

\smallskip
\begin{equation}
 \nonumber \frac{t_{i1}^2}{3} + \frac{t_{i2}^2}{3}+\cdots +\frac{t_{i(n-1)}^2}{3},\,\,\,\text{for }i\in\{1,2,\ldots,n\}.
\end{equation}
Then 

\begin{eqnarray}
\label{whe:7} \frac{t_{i1}^2}{3} + \frac{t_{i2}^2}{3}+\cdots +\frac{t_{i(n-1)}^2}{3} &=& t_{\ell_{1(i)}n}+t_{\ell_{2(i)}n}+t_{nn},   \text{ for } i\in \{1, \ldots, n-1 \}, \\
\label{whe:8} \frac{t_{n1}^2}{3} + \frac{t_{n2}^2}{3}+\cdots +\frac{t_{n(n-1)}^2}{3} &=& t_{1n}+t_{2n}+ \cdots +t_{(n-1)n}.
\end{eqnarray}

\smallskip
In what follow we shall assume $t_{nn}\neq 0$. In such case, by (\ref{eq:tinwheel}) we have that 
\begin{equation}\label{whe:00}
t_{in}=0 \text{ for } i \in\{1, \ldots,n-1\}. 
\end{equation}
This, together with Equation (\ref{whe:8}) implies

\begin{equation}\label{tnj=0}
t_{n1}=t_{n2} = \cdots =t_{n(n-1)} =0.
\end{equation}

Adding the equalities of (\ref{whe:7}),  for $i\in \{1, 2, \ldots, n-1\}$, and using (\ref{whe:00}) we obtain  that
\begin{equation} \label{whe:n9}
 \frac{1}{3}\sum_{i,j=1}^{n-1}t_{ij}^2= (n-1)t_{nn}.
\end{equation}

Substituting (\ref{whe:00}) in  (\ref{whe:0}) and (\ref{whe:6}) and adding the $n-1$ first coordinates of $g(e_i ^2)$, for $i \in \{ 1, \ldots, n-1\}$, we  obtain the following $n-1$ equalities

\begin{eqnarray}
\nonumber 2\sum_{\ell=1} ^{n-1} \frac{ t_{1\ell}^2}{3} &=&  \sum_{k=1} ^{n}(t_{2k}+t_{(n-1)k}),\\
\nonumber 2\sum_{\ell=1} ^{n-1} \frac{ t_{2\ell}^2}{3} &=&  \sum_{k=1} ^{n}(t_{3k}+t_{1k}),\\
\nonumber  &   \vdots &  \\
\nonumber 2\sum_{\ell=1} ^{n-1} \frac{ t_{(n-1)\ell}^2}{3} &=&  \sum_{k=1} ^{n}(t_{1k}+t_{(n-2)k}).
\end{eqnarray}

Adding the previous equalities we have 

\begin{equation} \nonumber
\frac{2}{3} \sum_{\ell =1}^{n-1} \left(t_{1 \ell}^2 + t_{2 \ell}^2 + \cdots + t_{(n-1) \ell}^2\right)=2\sum_{k=1}^{n}(t_{1k}+t_{2k}+\cdots +t_{(n-1)k}).
\end{equation}

By repeating the above procedure for  $i=n$ we obtain
\begin{equation} \nonumber
t^{2}_{nn}=\sum_{i,j=1}^{n-1}t_{ij}.
\end{equation}
Therefore we get by \eqref{whe:n9}
\begin{equation} \nonumber
3(n-1)t_{nn}=\sum_{\ell =1}^{n-1} \left(t_{1 \ell}^2 + t_{2 \ell}^2 + \cdots + t_{(n-1) \ell}^2\right)=3\sum_{k=1}^{n}\left(t_{1k}+t_{2k}+\cdots +t_{(n-1)k}\right)=3t^2_{nn},
\end{equation}
which implies $t_{nn}>0$, and as a consequence 
\begin{equation} \label{whe:09}
t_{nn}=(n-1).
\end{equation}

Observe that by replacing \eqref{whe:09} in \eqref{whe:n9} we obtain 

\begin{equation} \label{whe:n1}
\sum_{i,j=1}^{n-1}t_{ij}^2= 3(n-1)^2,
\end{equation}
\noindent
which will be important to get a contradiction. Now, we use \eqref{whe:0} (for $i=j=n$) which, together with \eqref{tnj=0} and \eqref{whe:09}, to get
$$g(e_n)\cdot g(e_n) = (n-1)(e_1 + \cdots + e_{n-1}).$$

On the other hand, since $g(e_n)\cdot g(e_n) = g(e_n^2)$ we have by \eqref{whe:6} (for $i=n$) combined with the previous expression that for $k\in \{1,2,\ldots, n\}$ it should be

\begin{equation*}\label{whe:n2}
t_{1k} + \cdots +t_{(n-1) k} = n-1.
\end{equation*}

From now on we will work with some expressions resulting from

\begin{equation}\label{whe:n3}
(t_{1k} + \cdots +t_{(n-1) k} )^2= (n-1)^2.
\end{equation}
 
For the sake of clarity we separate the analysis into three steps:

\begin{enumerate}
\item[{\bf i.}] Add the equalities resulting by letting $k=2$ and $k=n-1$ in \eqref{whe:n3}. Then

\begin{equation}\label{whe:n4}
\sum_{\ell =1}^{n-1} t_{\ell 2}^2 + \sum_{\ell =1}^{n-1} t_{\ell (n-1)}^2 + 2 \underbrace{ \sum_{i < \ell}^{n-1} (t_{i 2}t_{\ell 2} + t_{i (n-1)} t_{\ell (n-1)})}_{=0 \text{ by }\eqref{whe:1} \text{ and }\eqref{whe:00}} = 2 (n-1)^2.
\end{equation} 

\item[{\bf ii.}] Take $j\in \{2,\ldots,n-2\}$ and add the equalities resulting by letting $k=j-1$ and $k=j+1$ in \eqref{whe:n3}. Then, for $j\in \{2,\ldots,n-2\}$, it holds

\begin{equation}\label{whe:n5}
\sum_{\ell =1}^{n-1} t_{\ell (j-1)}^2 + \sum_{\ell =1}^{n-1} t_{\ell (j+1)}^2 + 2 \underbrace{ \sum_{i < \ell}^{n-1} (t_{i (j-1)}t_{\ell (j-1)} + t_{i (j+1)} t_{\ell (j+1)})}_{=0 \text{ by }\eqref{whe:2} \text{ and }\eqref{whe:00}} = 2 (n-1)^2.
\end{equation} 

\item[{\bf iii.}]  Add the equalities resulting by letting $k=1$ and $k=n-2$ in \eqref{whe:n3}. Then

\begin{equation}\label{whe:n6}
\sum_{\ell =1}^{n-1} t_{\ell 1}^2 + \sum_{\ell =1}^{n-1} t_{\ell (n-2)}^2 + 2 \underbrace{ \sum_{i < \ell}^{n-1} (t_{i 1}t_{\ell 1} + t_{i (n-2)} t_{\ell (n-2)})}_{=0 \text{ by }\eqref{whe:3} \text{ and }\eqref{whe:00}} = 2 (n-1)^2.
\end{equation} 
\end{enumerate}

Having {\bf i - iii} we add the equalities \eqref{whe:n4}, \eqref{whe:n5} (for $j\in \{2,\ldots,n-2\}$) and \eqref{whe:n6}. Therefore, we obtain
$$2 \underbrace{\left(\sum_{\ell =1}^{n-1} t_{\ell 1}^2 +\cdots + \sum_{\ell =1}^{n-1} t_{\ell (n-1)}^2 \right)}_{=3(n-1)^2 \text{ by }\eqref{whe:n1}} = 2(n-1)^3,$$
i.e., we obtain $3(n-1)^2=(n-1)^3$ which is a contradiction because it only holds for $n=4$. Therefore, it should be $t_{nn}=0$.

Notice that if $t_{nn}=0$ and $t_{in}=0$ for all $i \in \{1,\ldots, n-1\}$ then by (\ref{whe:7})  and (\ref{whe:8})  we have $g=0$. In the other case, if $t_{nn}=0$ and there exists $i_{0}\in\{1,\ldots,n-1\}$ such that  $t_{i_{0}n}\not =0$ we obtain, by (\ref{whe:7}), $t_{i_{0}1}=t_{i_{0}2}=\cdots=t_{i_{0}(n-1)}=0$ so $g(e_{i_{0}})=t_{i_{0}n}e_{i_{0}}$.  Then, $$g(e_{i_{0}}^2)= t_{i_{0}n}^2(e_{\ell_{1(i_{0})}}+ e_{\ell_{2(i_{0})}}+e_n ).$$ On the  other hand,  by (\ref{whe:6}),
$$g(e_{i_{0}}^2)=\sum_{k=1}^n (t_{\ell_{1(i_{0})}k}+ t_{\ell_{(i_{0})}k}+t_{nk})e_k,$$
so $t^{2}_{i_{0}n}=t_{\ell_{1(i_{0})}n}+ t_{\ell_{(i_{0})}n}+t_{nn}=0$, and therefore $t_{i_{0}n}=0$.

We conclude  that the only evolution homomorphism between $\A(W_n)$ and $\A_{RW}(W_n)$, for $n>4$, is the null map. In particular $\A(W_n) \not \cong \A_{RW}(W_n)$.


\end{proof}
 

\subsection{Complete $n$-partite graphs}

A natural generalization of the complete bipartite graph is the complete $n$-partite graph, for $n\geq 2$, with partitions of sizes $a_1, a_2, \ldots, a_n$, where $a_i \geq 1$ for $i\in \{1,2,\ldots,n\}$. This graph, which we denote by $K_{a_1, a_2, \ldots, a_n}$, has a set of vertices partitioned into $n$ disjoint sets of sizes $a_1, a_2, \ldots, a_n$,  respectively, in such a way that there is no edge connecting two vertices in the same subset, and every possible edge that could connect vertices in different subsets is part of the graph. 

\smallskip
The resulting evolution algebra associated to the graph $\mathcal{A}(K_{a_1, a_2, \ldots, a_n})$ is given by the generator set $\{e_1,\ldots,e_{a_1},e_{a_1 +1},\ldots ,e_{a_1 + a_2}, \ldots, e_{a_1 + \cdots + a_n}  \}$ and the relations:
\smallskip

\[
\begin{array}{ll}
 e_i^2   = \displaystyle \sum_{j=1}^{ a_2+\cdots + a_n} e_{a_1 + j}, & \text{for }i\in \{1,\ldots,a_1\},\\[.6cm]
 e_{a_1+\cdots +a_{t-1}+i}^2    =\displaystyle  \sum_{j=1}^{a_1+\cdots + a_{t-1}} e_{j} + \sum_{j=1}^{a_{t+1}+\cdots + a_{n}} e_{a_1+\cdots +a_{t}+j} ,&\begin{array}{l}
 \text{for }t\in \{2,\ldots,n-1\}\\
\text{and } i\in \{1,\ldots,a_t\},
 \end{array}\\[.6cm]
  e_{a_1+\cdots +a_{n-1}+i}^2   =\displaystyle \sum_{j=1}^{ a_1+\cdots + a_{n-1}} e_{ j}, &  \text{for }i\in \{1,\ldots,a_n\},\\[.6cm]
  e_i \cdot e_j =  0 , &\text{for }i\neq j.
\end{array}
\]
On the other hand, if we let $s=\sum_{k=1}^{n} a_{k}$, the evolution algebra associated to the random walk on the graph, denoted by $\mathcal{A}_{RW}(K_{a_1, a_2, \ldots, a_n})$, is given by the same set of generators as before and the relations:

\[
\begin{array}{lll}
 e_i^2   = \displaystyle \frac{1}{s-a_1} \left(\sum_{j=1}^{ a_2+\cdots + a_n} e_{a_1 + j}\right), & \text{for }i\in \{1,\ldots,a_1\},\\[.6cm]
 e_{a_1+\cdots +a_{t-1}+i}^2    =\displaystyle  \frac{1}{s-a_t}\left(\sum_{j=1}^{a_1+\cdots + a_{t-1}} e_{j} + \sum_{j=1}^{a_{t+1}+\cdots + a_{n}} e_{a_1+\cdots +a_{t}+j}\right) ,& 
 \begin{array}{l}
 \text{for }t\in \{2,\ldots,n-1\}\\
\text{and } i\in \{1,\ldots,a_t\},
 \end{array}\\[.6cm]
  e_{a_1+\cdots +a_{n-1}+i}^2   =\displaystyle \frac{1}{s-a_n}\left(\sum_{j=1}^{ a_1+\cdots + a_{n-1}} e_{ j}\right), &  \text{for }i\in\{1,\ldots,a_n\},\\[.6cm]
  e_i \cdot e_j =  0 , &\text{for }i\neq j.
\end{array}
\]

\smallskip
As a consequence of Theorem \ref{teo:iso} (i) we have that $\mathcal{A}(K_{a_1, a_2, \ldots, a_n})\cong \mathcal{A}_{RW}(K_{a_1, a_2, \ldots, a_n})$ as evolution algebras, provided $a_i =d$ for any $i$, where $d\geq 2$ is a given constant. On the other hand, we arrive at a similar conclusion for $n=2$ and any value of $a_i$ by Theorem \ref{teo:iso} (ii). As we show in the next example, this is not true in general.

\begin{exa}
Let $K_{1,1,2}$ be the complete $3$-partite graph, with partitions of sizes $1,1$ and $2$, see Figure 3.5.

\begin{figure}[h]
\label{FIG:3partite}
\begin{center}
\begin{tikzpicture}[scale=0.8]

\draw (-1.5,1.5) -- (1.5,1.5)--(1,-1.5)--(-1.5,1.5)--(-1,-1.5)--(1.5,1.5);


\filldraw [black] (-1.5,1.5) circle (2.5pt);
\draw (-1.5,1.7) node[above,font=\footnotesize] {$1$};
\filldraw [black] (1.5,1.5) circle (2.5pt);
\draw (1.5,1.7) node[above,font=\footnotesize] {$2$};
\filldraw [black] (-1,-1.5) circle (2.5pt);
\draw (-1,-1.7) node[below,font=\footnotesize] {$3$};
\filldraw [black] (1,-1.5) circle (2.5pt);
\draw (1,-1.7) node[below,font=\footnotesize] {$4$};

\end{tikzpicture}
\end{center}
\caption{Complete $3$-partite graph $K_{1,1,2}$.}
\end{figure}
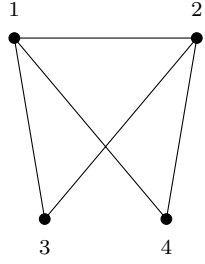
The resulting evolution algebras associated to $K_{1,1,2}$ are given by the set of generators $\{e_1, e_2,e_3,e_4\}$ and relations 
$$
\mathcal{A}(K_{1,1,2}):\left\{
\begin{array}{cl}
 e_1^2= e_2 + e_3+e_4,\\[.5cm]
 e_2^2=  e_1 + e_3+e_4,  \\[.5cm]
 e_i^2 = e_1+e_2 , & \text{ for }i\in \{3,4\}, \\[.5cm]
 e_i \cdot e_j =0, & \text{ for } i\neq j. 
								\end{array}\right.
$$
and 
\smallskip
$$
\mathcal{A}_{RW}(K_{1,1,2}):\left\{
\begin{array}{cl}
                      e_1^2=\displaystyle \frac{1}{3}\left( e_2 + e_3+e_4\right),\\[.5cm]
 e_2^2=\displaystyle \frac{1}{3}\left(e_1 + e_3+e_4\right),\\[.5cm]
 										e_i^2 =\displaystyle \frac{1}{2}\left(e_1+e_2\right), &  \text{ for }i\in \{3,4\}, \\[.5cm]
  e_i \cdot e_j =0, & \text{ for } i\neq j. 
\end{array}\right.
$$

Let $g:\A(K_{1,1,2})\longrightarrow \A_{RW}(K_{1,1,2})$ be an evolution homomorphism such that for any $i\in\{1,2,3,4\}$ 
$$g(e_i )= \sum _{k=1} ^{4}t_{ik}e_k,$$
where the $t_{ik}$'s are scalars. Then
\[
\begin{array}{lllllll}
g(e_i)\cdot g(e_j)
                   &=&\displaystyle \left(\frac{t_{i2}t_{j2}}{3}+\frac{t_{i3}t_{j3}}{2}+\frac{t_{i4}t_{j4}}{2}\right)e_1+\left(\frac{t_{i1}t_{j1}}{3}+\frac{t_{i3}t_{j3}}{2}+\frac{t_{i4}t_{j4}}{2}\right)e_2 \,\,+ \\[.5cm]
	                 &&\displaystyle\left(\frac{t_{i1}t_{j1}}{3} + \frac{t_{i2}t_{j2}}{3}\right)e_3 + \left(\frac{t_{i1}t_{j1}}{3} +\frac{t_{i2}t_{j2}}{3}\right)e_4  .\\ 
\end{array}
\]
As $g$ is an evolution homomorphism, we have $g(e_i)\cdot g(e_j)=0$ for any $i\not = j$. This implies
\begin{eqnarray}
\label{ex:1}\frac{t_{i2}t_{j2}}{3}+\frac{t_{i3}t_{j3}}{2}+\frac{t_{i4}t_{j4}}{2}   &=& 0,\\
\label{ex:2}\frac{t_{i1}t_{j1}}{3}+\frac{t_{i3}t_{j3}}{2}+\frac{t_{i4}t_{j4}}{2}  &=& 0,\\
\label{ex:3}t_{i1}t_{j1} +t_{i2}t_{j2}&=& 0.
\end{eqnarray}

Adding (\ref{ex:1}) and (\ref{ex:2})  and using  (\ref{ex:3})  we obtain $t_{i3}t_{j3} +t_{i4}t_{j4}=0$ so we can assert, again by (\ref{ex:1}) and (\ref{ex:2}) that
\smallskip
\begin{equation}\label{ex:primera}
t_{i2}t_{j2}=0 \text{ and } t_{i1}t_{j1}=0,\,\,\,  \text{ for } i,j \in \{ 1,2,3,4\} \text{ and } i\not=j.
\end{equation}
\smallskip

Also  we have 
$$
g(e_1)\cdot g(e_1)= \left( \frac{t_{12}^2 }{3} + \frac{t_{13}^2 }{2} +\frac{t_{14}^2 }{2} \right)e_1 + \left( \frac{t_{11}^2 }{3} + \frac{t_{13}^2 }{2} +\frac{t_{14}^2 }{2} \right)e_2 +  \left( \frac{t_{11}^2 }{3} + \frac{t_{12}^2 }{3} \right)e_3 + \left( \frac{t_{11}^2 }{3} + \frac{t_{12}^2 }{3} \right)e_4.  
$$
and 
\[
\begin{array}{lll}
g(e_1^2)&=& g(e_2+e_3+e_4) = \displaystyle \sum_{k=1}^{4}(t_{2k} + t_{3k} +t_{4k} )e_k, \\
\end{array} \]
Then
\begin{eqnarray}
\label{ex:segunda}t_{21}+t_{31}+ t_{41}&=& \frac{t_{12}^2}{3}+ \frac{t_{13}^2}{2}+ \frac{t_{14}^2}{2}, \\[.2cm]
\label{ex:terceira}t_{22}+t_{32}+ t_{42}&=& \frac{t_{11}^2}{3}+ \frac{t_{13}^2}{2}+ \frac{t_{14}^2}{2},\\[.2cm]
\label{ex:cuatro} t_{23}+t_{33}+ t_{43}&=& t_{24}+t_{34}+ t_{44}\,\,=\,\, \frac{t_{11}^2}{3}+ \frac{t_{12}^2}{3}.
\end{eqnarray}

By applying the same reasoning to $g(e_{i}^2)$ for $i\in\{2,3,4\}$,  we obtain

\begin{eqnarray}
\label{ex:Mtres} t_{11}+t_{31}+ t_{41}&=& \frac{t_{22}^2}{3}+ \frac{t_{23}^2}{2}+ \frac{t_{24}^2}{2}, \\[.2cm]
\label{ex:MMtres} t_{12}+t_{32}+ t_{42}&=& \frac{t_{21}^2}{3}+ \frac{t_{23}^2}{2}+ \frac{t_{24}^2}{2}, \\[.2cm]
\label{ex:Mcuatro} t_{11}+t_{21}&=& \frac{t_{32}^2}{3}+ \frac{t_{33}^2}{2}+ \frac{t_{34}^2}{2}= \frac{t_{42}^2}{3}+ \frac{t_{43}^2}{2}+ \frac{t_{44}^2}{2}, \\[.2cm]
\label{ex:Mcinco}  t_{12}+t_{22}&=& \frac{t_{31}^2}{3}+ \frac{t_{33}^2}{2}+ \frac{t_{34}^2}{2}= \frac{t_{41}^2}{3}+ \frac{t_{43}^2}{2}+ \frac{t_{44}^2}{2}, \\[.2cm]
\label{ex:Mseis}  t_{14}+t_{24} \,\,= \,\, t_{13}+t_{23} &=& \frac{t_{31}^2}{3}+ \frac{t_{32}^2}{3}\,\,=\,\, \frac{t_{41}^2}{3}+ \frac{t_{42}^2}{3}, 
\end{eqnarray}

\noindent
where Equations \eqref{ex:Mtres} and \eqref{ex:MMtres} are coming from $g(e_2^2)$, while Equations \eqref{ex:Mcuatro}-\eqref{ex:Mseis} are coming from $g(e_3^2)$ and $g(e_4^2)$. By \eqref{ex:primera}  we have four possible cases: 

\smallskip
\noindent
{\bf \underline{Case 1}: $t_{11}=t_{21}=t_{31}=t_{41}=0$.}  In this case, the left side of Equations  \eqref{ex:segunda} and \eqref{ex:Mtres} are both equal to zero, and $g(e_{1})=g(e_{2})=0$. This together with \eqref{ex:Mcuatro} implies $g=0$, i.e. $g $ is the null map.

\smallskip
\noindent
{\bf \underline{Case 2}: $t_{31}\neq 0$ or $t_{41}\neq 0$.} In this case the left side of Equations \eqref{ex:segunda} and \eqref{ex:Mtres} are not zero. Then $t_{11}=t_{21}=0$ and by (\ref{ex:Mcuatro}) 
$$ t_{32}=t_{33} =  t_{34}=t_{42}=  t_{43}=t_{44}=0,$$
which implies by (\ref{ex:Mcinco}) $ t_{31}^2=t_{41}^2$. Therefore, by (\ref{ex:primera}), $t_{31}=t_{41}=0$ and we get a contradiction. So it should be $t_{31}=t_{41}=0$.

\smallskip
\noindent
{\bf \underline{Case 3}: $t_{11}\neq 0$ and $t_{21}=t_{31}=t_{41}=0$.} Now, the left side of Equation  \eqref{ex:segunda} is zero while the left side of Equation \eqref{ex:Mtres} is not zero. Then by \eqref{ex:segunda}
$$t_{12}=  t_{13}=t_{14}=0,$$
and $t_{32}^2=t_{42}^2$ by (\ref{ex:Mseis}). Therefore, by (\ref{ex:primera}), 
\begin{equation*} \label{ex:paula}
 t_{32}=t_{42}=0,
\end{equation*}
 and by (\ref{ex:MMtres}), we get $t_{23}=t_{24}=0$. As we are assuming $t_{11} \neq 0$ then, by  (\ref{ex:terceira}), $t_{22} \neq 0$. This in (\ref{ex:terceira}) and (\ref{ex:Mtres}) implies  
$$ t_{22}= \frac{t_{11}^2}{3} \, \, \, \, \textrm{and} \, \, \, \,  t_{11}= \frac{t_{22}^2}{3},$$
so $t_{11}=t_{22}=3.$ By (\ref{ex:Mcuatro}) 
$$t_{33}^2+t_{34}^2=t_{43}^2+t_{44}^2=6,$$ and then $$(t_{33}^2+t_{34}^2) +(t_{43}^2+t_{44}^2)=12.$$
On the other hand, by (\ref{ex:cuatro})
$$ t_{33}+t_{43}=t_{34}+t_{44}=3,$$
thus $t^2_{33} + 2t_{33}t_{43} +t^2_{43}= 36$, $t^2_{34} + 2t_{34}t_{44} +t^2_{44}= 9$, and adding terms we get
$$
(t^2_{33}+t^2_{34})+(t^2_{43}+t^2_{44})+ 2(t_{33}t_{43}+t_{34}t_{44} )=18.
$$
We know, for $i=3$ and $j=4$ in \eqref{ex:1} and \eqref{ex:primera}, that $t_{33}t_{43}+t_{34}t_{44}=0$, hence $$(t^2_{33}+t^2_{34})+(t^2_{43}+t^2_{44})=18,$$ 
and we get a contradiction. Therefore, it should be $t_{11}=0$.

\smallskip
\noindent
{\bf \underline{Case 4}: $t_{21}\neq 0$ and $t_{11}=t_{31}=t_{41}=0$.} The computations in this case follows as in the Case 3, but now the left side of Equation \eqref{ex:Mtres} is zero while the left side of Equation \eqref{ex:segunda} is not zero. By proceeding as before we can conclude that it should be $t_{21}=0$. 

\smallskip
Finally, we conclude that the only possible case is the Case 1, and therefore the only evolution homomorphism between $\A(K_{1,1,2})$ and $\A_{RW}(K_{1,1,2})$ is the null map. This in turns implies $\A(K_{1,1,2}) \ncong \A_{RW}(K_{1,1,2})$ as evolution algebras.
\end{exa}

Although we believe that a necessary and sufficient condition for $\mathcal{A}(K_{a_1, a_2, \ldots, a_n})\cong \mathcal{A}_{RW}(K_{a_1, a_2, \ldots, a_n})$ is $n=2$ or $a_i$'s to be equal; a proof for it seems to require more work, and therefore it remains as an interesting open problem for future investigation.

\section*{Acknowledgements}

This work was supported by FAPESP [grant numbers 2015/03868-7, 2016/11648-0]; and CNPq [grant numbers 235081/2014-0, 304676/2016-0]. Part of this work was carried out during a stay of P.C. and P.M.R. at the Universit\'e Paris-Diderot, and a visit at the Universidad de Antioquia.  The authors wishes to thank these institutions for the hospitality and support. Special thanks are given to the referee, whose careful reading of the manuscript and valuable comments contributed to improve this paper.

\bigskip


\begin{thebibliography}{99}

\bibitem{Bondy}
J.A. Bondy  and  U.S.R. Murty. Graph Theory With Applications (Elsevier Science Ltd., 1976).

\bibitem{PMP}
P. Cadavid, M.L. Rodi\~no Montoya  and  P.M. Rodriguez, Characterization theorems for the space of derivations of evolution algebras associated to graphs. Linear and  Multilinear Algebra (2018).  

\bibitem{PMP2}
P. Cadavid, M.L. Rodi\~no Montoya and  P.M. Rodriguez, On the isomorphisms between evolution algebras of graphs and random walks.  eprint arXiv:1717.10516.

\bibitem{camacho/gomez/omirov/turdibaev/2013}
L.M. Camacho, J.R. G\'omez, B.A. Omirov and  R.M. Turdibaev, Some properties of evolution algebras, Bull. Korean Math. Soc. 50  {\bf  5}  (2013) 1481-1494. 

\bibitem{casado/molina/velasco/2016}
Y.C. Casado, M.S. Molina and  M.V. Velasco, Evolution algebras of arbitrary dimension and their decompositions, Linear Algebra Appl.  {\bf 495} (2016) 122-162.

\bibitem{Casas/Ladra/2014}
J.M. Casas, M. Ladra, B.A. Omirov  and U.A. Rozikov, On Evolution Algebras,  Algebra Colloq.  {\bf 21} (2014) 331.

\bibitem{Casas/Ladra/Rozikov/2011}
J.M. Casas, M. Ladra and  U.A. Rozikov, A chain of evolution algebras< Linear Algebra Appl. {\bf 435}  (2011)  852-870.

\bibitem{Elduque/Labra/2015}
A. Elduque and  A. Labra, Evolution algebras and graphs, J. Algebra Appl. {\bf 14} (2015) 1550103.

\bibitem{Falcon/Falcon/Nunez/2017}
O.J. Falc\'on, R.M. Falc\'on and  J. N\'u\~nez,  Classification of asexual diploid organisms by means of strongly isotopic evolution algebras defined over any field,  J. Algebra {\bf 472} (2017) 573-593.

\bibitem{karlin/taylor}
S. Karlin and H.M. Taylor. An Introduction to Stochastic Modeling, 3rd ed (Academic Press, 1998).

\bibitem{Labra/Ladra/Rozikov/2014}
A. Labra, M. Ladra and  U.A. Rozikov, An evolution algebra in population genetics,  Linear Algebra Appl. {\bf 457} (2014) 348-362.

\bibitem{ladra/rozikov/2013}
M. Ladra and U.A. Rozikov, Evolution algebra of a bisexual population, J. Algebra {\bf 378} (2013) 153-172.

\bibitem{nunez/2013}
J. N\'u\~nez, M. Silvero  and M.T. Villar, Mathematical tools for the future: Graph Theory and graphicable algebras, Appl. Math. Comput. {\bf 219} (2013) 6113-6125.

\bibitem{nunez/2014}
J. N\'u\~nez, M.L. Rodr\'iguez-Ar\'evalo  and M.T. Villar, Certain particular families of graphicable algebras, Appl. Math. Comput. {\bf 246} (2014) 416-425.

\bibitem{ross}
S.M. Ross, Introduction to Probability Models, 10th Ed. (Academic Press, 2010). 

\bibitem{Rozikov/Murodov/2014}
U.A. Rozikov and  Sh.N. Murodov, Chain of evolution algebras of ``chicken'' population, Linear Algebra Appl. {\bf 450} (2014) 186-201.

\bibitem{tian}
J.P. Tian,  Evolution algebras and their applications, Lecture Notes in Mathematics  (Springer-Verlag, 2008).

\bibitem{tian2}
J.P. Tian. Invitation to research of new mathematics from biology: evolution algebras, Topics in functional analysis and algebra, Contemp. Math. 672, 257-272, Amer. Math. Soc., Providence, RI, 2016.

\bibitem{tian3}
J.P. Tian and  P. Vojtechovsky,  Mathematical concepts of evolution algebras, Non-Mendelian genetics, Quasigroups Related Systems {\bf 14} (1) (2006) 111-122.

\end{thebibliography}
\end{document}